\documentclass[11pt]{amsart}

\usepackage[margin=1.05in]{geometry}
\usepackage{amsmath,amssymb,amsthm,mathtools}
\usepackage{microtype}
\usepackage[hidelinks]{hyperref}
\hypersetup{
  pdftitle={A finiteness result for split exceptional dual pairs},
  pdfauthor={Hung Yean Loke and Gordan Savin}
}

\newcommand{\C}{\mathbb C}
\newcommand{\g}{\mathfrak g}
\newcommand{\kalg}{\mathfrak k}
\newcommand{\p}{\mathfrak p}
\newcommand{\Sp}{\mathrm {Sp}}
\newcommand{\SL}{\mathrm {SL}}
\newcommand{\SU}{\mathrm {SU}}
\newcommand{\SO}{\mathrm {SO}}
\newcommand{\GL}{\mathrm {GL}}
\newcommand{\Spin}{\mathrm {Spin}}
\newcommand{\Gr}{\operatorname {Gr}}
\newcommand{\LG}{\operatorname {LG}}
\newcommand{\OGr}{\operatorname {OGr}}
\newcommand{\Sym}{\operatorname {Sym}}

\theoremstyle{plain}
\newtheorem{proposition}{Proposition}[section]
\newtheorem{theorem}[proposition]{Theorem}

\theoremstyle{remark}

\title{A finiteness result for split exceptional dual pairs}
\dedicatory{To Ranee Brylinski, in memoriam.}

\author{Hung Yean Loke}
\address{Department of Mathematics, National University of Singapore,
21 Lower Kent Ridge Road, Singapore 119077}
\email{matlhy@nus.edu.sg}

\author{Gordan Savin}
\address{Department of Mathematics, University of Utah,
Salt Lake City, Utah 84112}
\email{savin@math.utah.edu}

\subjclass[2010]{Primary 11F27, 22E46}
\keywords{Exceptional theta correspondences, minimal representations,
Harish--Chandra modules}
\thanks{This work is supported by the Croatian Science Foundation under
the project IP-2022-10-4615, a gift No.~946504 from the Simons Foundation
and an NUS MOE grant A-8002493-00-00.}

\begin{document}

\begin{abstract}
We prove that big theta lifts have finite length for thirteen split
exceptional dual pairs arising from the minimal representations of the
split real groups of types $E_6$, $E_7$, and $E_8$.
\end{abstract}

\maketitle

\section{Introduction}\label{sec:introduction}

Let $\mathfrak e$ be a split real Lie algebra of type $E_6$, $E_7$, or
$E_8$, and let $\Omega$ be the Harish--Chandra module of its spherical
minimal representation.  A reductive dual pair in $\mathfrak e$ will mean
a pair of reductive subalgebras
\[
                       \g_1\oplus\g_2\subset\mathfrak e
\]
which are preserved by a Cartan involution and are mutual centralizers, up
to the harmless center issues. Write
$K_1$ and $K_2$ for the corresponding maximal compact subgroups.  If
$\pi_2$ is an irreducible $(\g_2,K_2)$-module, its big theta lift is the
$(\g_1,K_1)$-module
\[
       \Theta(\pi_2)
          =(\Omega\otimes\pi_2^\vee)_{(\g_2,K_2)}.
\]
Our purpose is to prove that these algebraic big theta lifts have finite
length for a broad family of split exceptional dual pairs.

Each split algebra $\mathfrak e$, for $n=6,7,8$, contains a nested chain
of split subalgebras of types
\begin{equation}\label{eq:intro-chain}
                 A_1\subset A_2\subset G_2\subset D_4\subset F_4.
\end{equation}
Taking the centralizer of each member gives the pairs displayed in
\eqref{eq:centralizer-table}; see also \cite{LSR}.  There are five entries
for each of the three ambient algebras.  In $E_8$, however, the
centralizer of $G_2$ is $F_4$ and the centralizer of $F_4$ is $G_2$.
Those two entries therefore describe the same unordered pair, leaving
fourteen distinct pairs in all.

Our main result treats all but the symmetric $D_4$ pair in $E_8$.

\begin{theorem}\label{thm:main}
Let $\g_1\oplus\g_2\subset\mathfrak e$ be one of the fourteen split
centralizer pairs obtained from \eqref{eq:intro-chain}.  Unless
\[
                         \g_1\oplus\g_2
                           =D_4\oplus D_4\subset E_8,
\]
the big theta lift of every irreducible Harish--Chandra module has finite
length.  This holds for theta lifting in either direction.  Thus the
theorem applies to thirteen split exceptional dual pairs.
\end{theorem}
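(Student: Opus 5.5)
The plan is to reduce finite length to a finiteness statement about $K_1$-types, and then prove that statement with the Howe/Kobayashi admissibility method. Let $\pi_2$ be an irreducible $(\g_2,K_2)$-module. The module $\Theta(\pi_2)$ is a quotient of $\Omega\otimes\pi_2^\vee$. It is therefore finitely generated over $U(\g_1)$ once we know that $\Omega$ is finitely generated as a $(\g_1\oplus\g_2,K_1\times K_2)$-module in a way compatible with taking coinvariants. Moreover $Z(\g_1)$ acts on $\Theta(\pi_2)$ through finitely many characters. This holds because $\Omega$ is annihilated by the Joseph ideal, so $Z(\g_1)$ and $Z(\g_2)$ act on $\Omega$ compatibly (an infinitesimal correspondence). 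By Harish-Chandra's theorem, a finitely generated, $Z(\g_1)$-finite $(\g_1,K_1)$-module with finite $K_1$-multiplicities has finite length. So the main task is to show that each $K_1$-type occurs in $\Theta(\pi_2)$ with finite multiplicity.

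\textbf{Step 1: the $K$-type structure of $\Omega$.} The $K$-types of the minimal representation $\Omega$ are explicitly known. In the quaternionic-type realization for split $\mathfrak e$, one has $K=(\SU_2\times L)/\mu_2$ with $L$ of type $A_5$, $D_6$ or $E_7$ (compact forms of the relevant groups), and
\[\Omega|_K=\bigoplus_{n\ge0} V_{n+a}\otimes W_n,\]
where $V_m$ is the $m$-th symmetric power of $\C^2$ and the $W_n$ are the Cartan powers of a fixed representation of $L$. For each pair I would compute how $K_1\times K_2$ sits inside $K$ along the chain \eqref{eq:intro-chain}. In every case one factor, say $K_2$, is "small": it contains the $\SU_2$ factor, or meets it nontrivially.

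\textbf{Step 2: the admissibility criterion.} The key claim is that $\Omega$ restricted to $K_1\times K_2$ has the following property. For each $K_2$-type $\tau$, the $\tau$-isotypic part of $\Omega$, viewed as a $K_1$-module, has finite multiplicities. Equivalently, in Kobayashi's terms, the asymptotic $K$-support of $\Omega$ meets the orthogonal complement $\mathfrak k_1^\perp$ of $\mathfrak k_1$ only in $\{0\}$ after projecting appropriately. This is checked in two moves. First, the asymptotic cone of $\Omega|_K$ is the ray spanned by the highest weights of $V_1\otimes W_1$. Second, one intersects that cone with the moment map image for $K_1$.

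Given this property, the rest is formal. The coinvariants $(\Omega\otimes\pi_2^\vee)_{\g_2,K_2}$ involve only finitely many $K_2$-types $\tau$ for each $K_1$-type $\sigma$: they are controlled by the $K_2$-types of $\pi_2$ that generate $\pi_2$, together with the action of $\p_2$. Hence each multiplicity $\sigma$ in $\Theta(\pi_2)$ is bounded by finite sums of multiplicities of $\sigma\otimes\tau$ in $\Omega$. The lifting in the opposite direction is handled by the same criterion with the roles of $K_1$ and $K_2$ swapped. That requires proving admissibility of $\Omega|_{K_2}$-isotypics over $K_1$ as well, which is the bilateral condition.

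\textbf{Main obstacle.} The hard part is Step 2 carried out uniformly over the thirteen cases, especially the direction where the big group is $F_4$ or $E_6$ and $\pi_2$ lives on $A_1$ or $A_2$. I would first compute the branching $W_n|_{L\cap K_1}$ explicitly in low degrees and extract the growth exponents. If finite multiplicity turned out to be false naively, the fallback is a finite-generation argument via the associated variety. One shows that the associated variety of $\Theta(\pi_2)$ lies in the image of $\overline{\mathcal O_{\min}}\cap\g_2^\perp$ under projection. Then one checks, by Vogan's criterion, that this image is a finite union of nilpotent orbits meeting $\p_1$ in a way that forces finite length. This check is also where $D_4\oplus D_4\subset E_8$ must fail: there the projection of $\overline{\mathcal O_{\min}}\cap\g_2^\perp$ to $\g_1$ is too large, and the $K$-type bound does not close.
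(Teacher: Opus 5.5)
There is a genuine gap, and it sits where you place the main task.

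\textbf{The main task is misplaced.} By Harish-Chandra's finiteness theorem (Wallach, Theorems 3.4.1 and 4.2.1), a finitely generated $Z(\g_1)$-finite $(\g_1,K_1)$-module is automatically admissible and of finite length. So once you have finite generation and the infinitesimal-character correspondence, multiplicity bounds come for free. The real content is finite generation, and you simply assume it (``once we know that $\Omega$ is finitely generated as a $(\g_1\oplus\g_2,K_1\times K_2)$-module'') without giving any mechanism.

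\textbf{The admissibility claim of Step 2 is false.} $\Omega|_{K_1\times K_2}$ is not admissible. Whenever $\g_1\neq0$, the $K_1$-equivariant isomorphism $\Sym(\p_1)\simeq R^{K_2}$ gives $\operatorname{gr}(\Omega^{K_1\times K_2})\simeq\Sym(\p_1)^{K_1}$. This contains every power of the Killing form restricted to $\p_1$, so already $\sigma=\tau=\mathbf 1$ occurs with infinite multiplicity. Hence no Kobayashi-type asymptotic-support condition can hold. Bounding the multiplicity of $\sigma$ in $\Theta(\pi_2)$ by multiplicities of $\sigma\otimes\tau$ in $\Omega$ yields only $\infty$. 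Your further claim that only finitely many $\tau$ matter for each $\sigma$ is itself the unproved statement that $\operatorname{Hom}_{K_1}(\sigma,\Omega)$ is finitely generated over $U(\g_2)$.

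\textbf{Step 1 uses the wrong $K$.} The group $(\SU_2\times L)/\mu_2$ is the maximal compact subgroup of the quaternionic forms. For the split groups $K$ is $\Sp(4)/\{\pm1\}$, $\SU(8)/\{\pm I\}$ or $\Spin(16)/\mu_2$, with $\Omega|_K=\bigoplus_n V_K(n\varpi)$.

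\textbf{The missing idea.} You need to control $\Omega$ through $\operatorname{gr}\Omega=\C[\widehat X]$, the cone over $\LG(4,8)$, $\Gr(4,8)$ or $\OGr^+(8,16)$.
\begin{enumerate}
\item Hilbert's basis theorem and averaging make each covariant module $(R\otimes\tau^\vee)^{K_2}$ finite over $R^{K_2}$.
\item With $\g_1$ the smaller member, $\Sym(\p_1)\to R^{K_2}$ is onto. Injectivity of $U(\g_1)\otimes_{U(\kalg_1)}\mathbf 1\to\Omega^{K_2}$ comes from $L^{2-\varepsilon}(G_1)$ restriction estimates on matrix coefficients. Surjectivity then follows from equality of Hilbert functions.
\item Filtered generation makes $\operatorname{Hom}_{K_2}(\tau,\Omega)$, and hence $\Theta(\pi_2)$, finitely generated over $U(\g_1)$.
\item For the opposite direction, admissibility of $\Theta(\pi_1)$ comes not from $\Omega$ but from the see-saw identity $\operatorname{Hom}_{K_2}(\tau,\Theta(\pi_1))^\vee\simeq\operatorname{Hom}_{(\g_1,K_1)}(\operatorname{Hom}_{K_2}(\tau,\Omega),\pi_1)$. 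This is finite-dimensional because the first argument is finitely generated and $\pi_1$ is admissible.
\item The infinitesimal-character correspondence (Li, supplemented by Loke for $D_4\times\SL_2^3\subset E_7$) and the finiteness of irreducibles with a given infinitesimal character then bound the length. Finite generation of lifts to the larger member, which your symmetric plan would need, is never required.
\end{enumerate}

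\textbf{The excluded pair.} $D_4\times D_4\subset E_8$ drops out by a dimension count, $\dim R^{K_D}\geq 29-12=17>16=\dim\p_1$, not by an associated-variety phenomenon. Your associated-variety fallback cannot replace the missing step, since an associated-variety bound yields neither finite generation nor finite multiplicities.
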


We describe the argument.  For an irreducible $K_2$-module $\tau$,
consider the compact-type lift
\[
             \Theta_1(\tau)
                 =(\Omega\otimes\tau^\vee)_{K_2}.
\]
The central point is that $\Theta_1(\tau)$ is finitely generated over
$U(\g_1)$.  Three ingredients enter the proof.

First, give $\Omega$ its natural $K$-invariant filtration and put
\[
                          R=\operatorname{gr}\Omega.
\]
Then $R$ is the coordinate ring of the highest-weight cone associated
with the minimal representation: these are the Pl\"ucker cones over
$\LG(4,8)$ and $\Gr(4,8)$ in types $E_6$ and $E_7$, and the pure-spinor
cone over $\OGr^+(8,16)$ in type $E_8$.  Second, the Hilbert basis theorem
and averaging over $K_2$ imply that the covariant space
\[
                        (R\otimes\tau^\vee)_{K_2}
\]
is a finitely generated module over $R^{K_2}$.  Third, with $\g_1$ chosen
as the smaller member of the dual pair, the natural graded map
\begin{equation}\label{eq:intro-invariant-map}
                         \Sym(\p_1)\longrightarrow R^{K_2}
\end{equation}
is an isomorphism in the thirteen cases of the theorem.  More precisely,
it arises from a strict filtered isomorphism between the universal
spherical $(\g_1,K_1)$-module and $\Omega^{K_2}$.  The injectivity of
the filtered action map follows from restriction estimates placing the
matrix coefficients of $\Omega$ in $L^{2-\varepsilon}(G_1)$; the
surjectivity then follows from the calculation of the graded invariant
dimensions.

The first two ingredients and \eqref{eq:intro-invariant-map} show that
the associated graded compact-type lift is finitely generated over
$\Sym(\p_1)$.  A filtered-generation argument then lifts its homogeneous
generators and proves finite generation over $U(\g_1)$.  Taking a
$K_2$-type in an irreducible module $\pi_2$ exhibits $\Theta(\pi_2)$ as a
quotient of such a compact-type lift.  Consequently, the big theta lift
is finitely generated in one direction.  Applying the same argument from
the other side, through the usual see-saw adjunction, proves admissibility
in the reverse direction.  The known correspondence of infinitesimal
characters, together with the standard finiteness theorems for
Harish--Chandra modules, then gives finite length in both directions.

The paper is organized as follows.  Section~\ref{sec:filtered-generation}
proves the elementary filtered-generation lemma.  Sections
\ref{sec:minimal-modules} and \ref{sec:cones} recall the three minimal
representations and their homogeneous cones.  Section
\ref{sec:compact-lifts} proves finite generation of compact-type lifts by
averaging over the compact group,
and Section~\ref{sec:big-theta} passes from these modules to big theta
lifts.  In Section~\ref{sec:invariant-cyclicity} we verify
the required invariant dimensions for the split centralizer pairs and
reduce Theorem~\ref{thm:main} to the injectivity of the filtered action
maps.  Section~\ref{sec:decay} proves the matrix-coefficient estimates
that supply this injectivity and thereby completes the argument.

This paper was written with assistance of ChatGTP. The organization of the paper 
and the proofs are due to the authors.  

\section{A filtered-generation lemma}\label{sec:filtered-generation}

Let $(\g,K)$ be a Harish--Chandra pair, with complexified Cartan
decomposition
\[
                       \g=\kalg\oplus\p.
\]
Let $V$ be a $(\g,K)$-module with an exhaustive increasing filtration
\[
 0=V_{-1}\subset V_0\subset V_1\subset\cdots,
 \qquad V=\bigcup_{n\geq0}V_n,
\]
such that every $V_n$ is $K$-stable and
\[
                         \p V_n\subset V_{n+1}.
\]
Equivalently, $\g V_n\subset V_{n+1}$; the sharper degree-zero statement
for $\kalg$ follows from $K$-stability.
The $K$-stability implies $\kalg V_n\subset V_n$.  Consequently $\p$ acts
in degree one on
\[
                    \operatorname{gr}V
                    =\bigoplus_{n\geq0}V_n/V_{n-1}.
\]
If $X,Y\in\p$, then $[X,Y]\in\kalg$ preserves the filtration degree.
Hence $XY-YX$ has degree at most one less than $XY$ on the associated
graded module.  The degree-one operators coming from $\p$ therefore
commute, and $\operatorname{gr}V$ is naturally a graded
$(\Sym(\p),K)$-module.

\begin{proposition}[Filtered generation]\label{prop:filtered-generation}
Suppose that $\operatorname{gr}V$ is generated over $\Sym(\p)$ by a
finite-dimensional homogeneous $K$-submodule
\[
                         F=\bigoplus_d F_d,
             \qquad F_d\subset V_d/V_{d-1}.
\]
For every $d$, choose a $K$-stable lift
$\widetilde F_d\subset V_d$ of $F_d$, and put
$\widetilde F=\bigoplus_d\widetilde F_d$.  Then
\[
                              V=U(\g)\widetilde F.
\]
Equivalently, there is a natural surjection
\[
 U(\g)\otimes_{U(\kalg)}\widetilde F\twoheadrightarrow V.
\]
Thus, if the associated graded module is generated by finitely many
homogeneous $K$-types, the original Harish--Chandra module is generated by
any $K$-stable filtered lifts of those occurrences.
\end{proposition}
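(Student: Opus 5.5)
We will show by induction on $n$ that $V_n\subset W:=U(\g)\widetilde F$. Since the filtration is exhaustive, this gives $V=W$. The surjection $U(\g)\otimes_{U(\kalg)}\widetilde F\to V$ is then simply the action map. It is well defined because $\widetilde F$ is $K$-stable, hence a $\kalg$-submodule, and it is surjective by the equality $V=W$. We will also use that $W$ is $K$-stable, being generated by a $K$-stable subspace.

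The base case is $V_{-1}=0$. For the inductive step, assume $V_{n-1}\subset W$ and let $v\in V_n$ with image $\bar v\in V_n/V_{n-1}$. By hypothesis $\operatorname{gr}V=\Sym(\p)F$. Grading is respected, so we may write
\[
\bar v=\sum_d\sum_i \bar P_{d,i}\, f_{d,i},
\]
where $f_{d,i}\in F_d$ and each $\bar P_{d,i}$ is a homogeneous element of $\Sym^{n-d}(\p)$. Only $d\le n$ occur, and only finitely many $d$ matter since $F$ is finite-dimensional. Write each $\bar P_{d,i}$ as a sum of monomials $X_1\cdots X_{n-d}$ with $X_j\in\p$, and lift it to the element $P_{d,i}\in U(\g)$ given by the same sum of ordered products. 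Let $\tilde f_{d,i}\in\widetilde F_d$ be the lift of $f_{d,i}$, which exists because $\widetilde F_d\to F_d$ is an isomorphism by the choice of lift.

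The key compatibility to check is that $P_{d,i}\tilde f_{d,i}\in V_n$ and that its image in $V_n/V_{n-1}$ is $\bar P_{d,i}f_{d,i}$. This follows from $\p V_m\subset V_{m+1}$ applied $n-d$ times, starting from $\tilde f_{d,i}\in V_d$. By the very definition of the $\Sym(\p)$-action on $\operatorname{gr}V$, each $X\in\p$ acts as the induced map $V_m/V_{m-1}\to V_{m+1}/V_m$. Independence of the ordering is the commutativity already established before the statement, so the symmetric product acts as computed by any ordered lift. Consequently
\[
w:=\sum_{d,i}P_{d,i}\tilde f_{d,i}\in W\cap V_n \quad\text{and}\quad v-w\in V_{n-1}\subset W.
\]
Hence $v\in W$, which completes the induction.

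The only delicate point is this compatibility of the lifted action with the graded action, including the fact that lower-order terms land in $V_{n-1}$. This is exactly what the degree-one property of $\p$ guarantees. Everything else is formal bookkeeping.
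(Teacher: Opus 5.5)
Your proposal is correct and matches the paper's proof: induct on the filtration degree, write the leading term of $v\in V_n$ as $\sum \bar P_{d,i} f_{d,i}$ with $\bar P_{d,i}\in\Sym^{n-d}(\p)$, lift the polynomials to ordered products in $U(\g)$ and the $f_{d,i}$ to $\widetilde F_d$, and put the difference in $V_{n-1}\subset W$. Your remark that $W$ is $K$-stable is never used and can be dropped.
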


\begin{proof}
The required $K$-stable lifts exist because finite-dimensional
$K$-modules are semisimple.  Let $W=U(\g)\widetilde F$.  We prove
$V_n\subset W$ by induction on $n$.  If $v\in V_n$, its leading term
$\sigma_n(v)\in V_n/V_{n-1}$ can be written
\[
       \sigma_n(v)=\sum_j f_j\,u_j,
       \qquad u_j\in F_{d_j},\quad
       f_j\in\Sym^{n-d_j}(\p).
\]
Choose $\widetilde u_j\in\widetilde F_{d_j}$ above $u_j$ and PBW lifts
$\widetilde f_j\in U(\g)$ of the polynomials $f_j$.  The leading term of
$\sum_j\widetilde f_j\widetilde u_j$ is $\sigma_n(v)$, and therefore
\[
             v-\sum_j\widetilde f_j\widetilde u_j\in V_{n-1}.
\]
The difference belongs to $W$ by induction, so $v\in W$.  This proves the
first assertion.  The second is the corresponding multiplication map,
which is well defined because $\widetilde F$ is $K$-stable.
\end{proof}

\section{The three split groups of type \texorpdfstring{$E$}{E}}
\label{sec:minimal-modules}

Let $H_i$ be the split real group of type $E_i$, for $i=6,7,8$, and let
$\Omega_i$ be its spherical minimal Harish--Chandra module.  It is the
irreducible spherical module whose annihilator is the Joseph ideal.  The
construction of Brylinski--Kostant \cite{BK} gives a uniform description
of its $K_i$-types.

Up to the indicated finite central quotients, the complexified Cartan
decompositions and the noncompact modules are as follows.  We number the
fundamental weights in the standard Bourbaki convention.
\[
\begin{array}{c|c|c|c}
H_i & K_i & \p_i & \text{highest weight of }\p_i \\ \hline
E_{6(6)} & \Sp(4)/\{\pm1\}
 & \Lambda^4_0\C^8 & \omega_4\ (C_4)\\
E_{7(7)} & \SU(8)/\{\pm I\}
 & \Lambda^4\C^8 & \omega_4\ (A_7)\\
E_{8(8)} & \Spin(16)/\mu_2
 & S^+ & \omega_8\ (D_8)
\end{array}
\]
Here $\Sp(4)$ is the compact symplectic group of rank four,
$\Lambda^4_0\C^8$ is the kernel of symplectic contraction, and $S^+$ is
one of the two $128$-dimensional half-spin modules.  In the last row,
$\mu_2$ is the central subgroup acting trivially on the chosen $S^+$; thus
$K_8$ is the corresponding semispin group.  Interchanging $S^+$ and $S^-$
interchanges $\omega_8$ and $\omega_7$ and changes none of the statements.

The $K_i$-spectra are multiplicity free:
\begin{align*}
 \Omega_6\big|_{K_6}
    &\simeq\bigoplus_{n\geq0}V_{C_4}(n\omega_4),\\
 \Omega_7\big|_{K_7}
    &\simeq\bigoplus_{n\geq0}V_{A_7}(n\omega_4),\\
 \Omega_8\big|_{K_8}
    &\simeq\bigoplus_{n\geq0}V_{D_8}(n\omega_8).
\end{align*}
In each case the summand with $n=0$ is the trivial $K_i$-type and contains
the spherical vector.

\subsection{The Joseph ideal and its quadratic generators}

We recall the part of the construction of the Joseph ideal that will be
needed below.  All Lie algebras and modules in this paragraph are
complexified.  Let $\beta_i$ be the highest root of $\mathfrak e_i$.  There
is a multiplicity-free decomposition
\begin{equation}\label{eq:symmetric-square-adjoint}
 S^2(\mathfrak e_i)
   =V_{\mathfrak e_i}(2\beta_i)\oplus\mathbf1\oplus W_i,
\end{equation}
where $W_i$ is irreducible in the three exceptional cases under
consideration.  Its dimensions are
\[
 \dim W_6=650,\qquad \dim W_7=1539,\qquad \dim W_8=3875.
\]
More intrinsically, if $\mathfrak m_i$ is the centralizer of
$h_{\beta_i}$ and $\beta_i'$ is the highest root of
$[\mathfrak m_i,\mathfrak m_i]$, then
$W_i=V_{\mathfrak e_i}(\beta_i+\beta_i')$.

Let $C_i\in U(\mathfrak e_i)$ be the Casimir element.  The copy of $W_i$
in \eqref{eq:symmetric-square-adjoint} has a canonical lift
$\widetilde W_i\subset U_2(\mathfrak e_i)$, obtained by symmetrization.
Garfinkle's description \cite{Garfinkle}, in the form recalled
by Gan--Savin \cite{GSJ}, says that the Joseph ideal $J_i$ is generated by
\begin{equation}\label{eq:Joseph-generators}
                     \widetilde W_i\quad\text{and}\quad C_i-c_i,
\end{equation}
where $c_i$ is the Casimir eigenvalue of the minimal representation.
Let $\mathcal O_{\min}\subset\mathfrak e_i$ denote the minimal nonzero
nilpotent adjoint orbit.  Then
\begin{equation}\label{eq:minimal-orbit-ideal}
 \operatorname{gr}J_i=I(\overline{\mathcal O}_{\min})
   =\big\langle\mathbf1\oplus W_i\big\rangle\subset S(\mathfrak e_i).
\end{equation}
Equivalently, the kernel of the Cartan-product map
\[
 S^2(\mathfrak e_i)\longrightarrow V_{\mathfrak e_i}(2\beta_i)
\]
is precisely $\mathbf1\oplus W_i$.  Thus both the classical equations of
the minimal nilpotent orbit and their quantization in the Joseph ideal
are generated in degree two.

\section{The three highest-weight cones}\label{sec:cones}

Here we prove that $R_i=\operatorname{gr}\Omega_i$ is the coordinate ring
of the highest-weight cone in $\p_i$.

Let $v_i$ be a highest-weight vector in $\p_i$ and put
\[
             \widehat X_i=\overline{K_{i,\C}\cdot v_i}\subset\p_i,
             \qquad X_i=\mathbf P(\widehat X_i).
\]
The projective variety $X_i$ is the closed $K_{i,\C}$-orbit in
$\mathbf P(\p_i)$.  The three cases are
\[
\begin{array}{c|c|c}
i & X_i & \text{homogeneous embedding}\\ \hline
6 & \LG(4,8) &
 \LG(4,8)\hookrightarrow\mathbf P(\Lambda^4_0\C^8)\\
7 & \Gr(4,8) &
 \Gr(4,8)\hookrightarrow\mathbf P(\Lambda^4\C^8)\\
8 & \OGr^+(8,16) &
\OGr^+(8,16)\hookrightarrow\mathbf P(S^+).
\end{array}
\]
Their dimensions are $10$, $16$, and $28$, respectively; the corresponding
affine cones have dimensions $11$, $17$, and $29$.
The first two maps are the Pl\"ucker embeddings; in the symplectic case
the image lies in the primitive summand $\Lambda^4_0\C^8$.  The third
variety is one of the two connected components of the Grassmannian of
maximal isotropic eight-planes in $\C^{16}$.  Its embedding in
$\mathbf P(S^+)$ is the spinor, or pure-spinor, embedding.  It is important
that this is the minimal homogeneous embedding, not the ordinary Pl\"ucker
embedding into $\mathbf P(\Lambda^8\C^{16})$: the Pl\"ucker line bundle is
the square of the spinor line bundle.

Use the restriction of the Killing form to identify
$\p_i\simeq\p_i^\vee$, and put
$\varpi_i=\omega_4,\omega_4,\omega_8$, respectively, so that
$\p_i=V_{K_{i,\C}}(\varpi_i)$.  The degree-two equations of the three
cones are the kernels of the Cartan-product maps
\begin{equation}\label{eq:quadratic-Cartan-product}
 S^2(\p_i)\longrightarrow V_{K_{i,\C}}(2\varpi_i).
\end{equation}
More explicitly, one has
\begin{equation}\label{eq:quadratic-cone-table}
\begin{array}{c|c|c}
i&S^2(\p_i)&I_2(\widehat X_i)\\ \hline
6&V_{C_4}(2\omega_4)\oplus V_{C_4}(2\omega_2)\oplus\mathbf1
  &V_{C_4}(2\omega_2)\oplus\mathbf1\\
7&V_{A_7}(2\omega_4)\oplus V_{A_7}(\omega_2+\omega_6)\oplus\mathbf1
  &V_{A_7}(\omega_2+\omega_6)\oplus\mathbf1\\
8&V_{D_8}(2\omega_8)\oplus V_{D_8}(\omega_4)\oplus\mathbf1
  &V_{D_8}(\omega_4)\oplus\mathbf1.
\end{array}
\end{equation}
The dimensions of the nontrivial quadratic-relation modules in the last
column are $308$, $720$, and $1820$.  These are the symplectic Pl\"ucker,
ordinary Pl\"ucker, and pure-spinor quadrics, respectively.  Together
with the invariant quadratic equation in each row, they generate the
homogeneous ideal of $\widehat X_i$.

We now relate these equations directly to the Joseph ideal.  The Cartan
decomposition gives a $K_{i,\C}$-equivariant projection
\begin{equation}\label{eq:relative-quadratic-projection}
 \operatorname{pr}_{\p_i}:S^2(\mathfrak e_i)\longrightarrow S^2(\p_i).
\end{equation}
Under the Killing-form identifications, this projection is the restriction
of quadratic polynomials from $\mathfrak e_i$ to $\p_i$.

\begin{proposition}\label{prop:Joseph-to-Plucker}
For $i=6,7,8$,
\begin{equation}\label{eq:Joseph-to-Plucker}
 \operatorname{pr}_{\p_i}
 \ker\!\left(S^2(\mathfrak e_i)\longrightarrow
                  V_{\mathfrak e_i}(2\beta_i)\right)
   =
 \ker\!\left(S^2(\p_i)\longrightarrow
                  V_{K_{i,\C}}(2\varpi_i)\right).
\end{equation}
The restrictions of $\mathbf1\oplus W_i$ generate the ideal
$I(\widehat X_i)$.  In particular, the scheme-theoretic intersection
$\overline{\mathcal O}_{\min}\cap\p_i$ is reduced and equals
$\widehat X_i$.
\end{proposition}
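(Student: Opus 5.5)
We prove the two inclusions in \eqref{eq:Joseph-to-Plucker} separately; ideal generation and reducedness then follow from \eqref{eq:quadratic-cone-table}.

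\emph{Step 1: the left side lies in the right side.} Every element of $\mathbf 1\oplus W_i$ vanishes on $\overline{\mathcal O}_{\min}$ by \eqref{eq:minimal-orbit-ideal}. So its restriction to $\p_i$ vanishes on $\overline{\mathcal O}_{\min}\cap\p_i$. We first check that $v_i\in\overline{\mathcal O}_{\min}$. Choose a Cartan subalgebra of $\kalg_i$ and a compatible Borel of $\mathfrak e_i$. The highest weight vector $v_i\in\p_i$ is then an $\mathfrak e_i$-root vector $X_\gamma$ for a noncompact root $\gamma$. It lies in a single adjoint orbit of root vectors, and since $\mathfrak e_i$ is simply laced, every root vector lies in $\mathcal O_{\min}$. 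As $\overline{\mathcal O}_{\min}$ is $K_{i,\C}$-stable, we get $\widehat X_i\subset\overline{\mathcal O}_{\min}\cap\p_i$. Hence $\operatorname{pr}_{\p_i}(\mathbf1\oplus W_i)\subset I_2(\widehat X_i)$, and $I_2(\widehat X_i)$ is the kernel of the Cartan product \eqref{eq:quadratic-Cartan-product}, because $v_i^2$ spans the $2\varpi_i$-isotypic part of $S^2(\p_i)^\vee$ evaluated at $v_i$.

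\emph{Step 2: the reverse inclusion.} By the table \eqref{eq:quadratic-cone-table}, the right side equals $\mathbf1\oplus U_i$ with $U_i$ irreducible, of dimension $308$, $720$ or $1820$. Since $\operatorname{pr}_{\p_i}$ is $K_{i,\C}$-equivariant, it suffices to show that its image meets both summands nontrivially.

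For the trivial summand, take the restriction of the Killing form, which is the element $\mathbf 1\subset S^2(\mathfrak e_i)$. It restricts to the Killing form of $\p_i$, which is nonzero.

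For $U_i$, we need a vector of $W_i$ whose restriction has a nonzero $U_i$-component. Take a lowest-weight vector of $U_i$ for $\kalg_i$; its weight is the negative of $2\omega_2$, $\omega_2+\omega_6$ or $\omega_4$ in the respective cases. Write it explicitly as a quadric $q=X_\alpha X_{\alpha'}-X_{\gamma}X_{\gamma'}$ in noncompact root vectors, with $\alpha+\alpha'=\gamma+\gamma'$. (For Grassmannians this is a three-term Plücker relation; a two-term form suffices for a weight vector orthogonal to the Cartan square.) Because $\alpha+\alpha'$ is not twice a root, $q$ has no component in the Cartan square $V(2\beta_i)$ of $S^2(\mathfrak e_i)$. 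A weight-vector computation shows that its trivial component also vanishes, since its weight is nonzero. Hence $q\in W_i$ already as an element of $S^2(\mathfrak e_i)$, and $\operatorname{pr}_{\p_i}q=q\neq0$ lies in $U_i$.

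A more conceptual way to organize this step is to note that $S^2(\p_i)\subset S^2(\mathfrak e_i)$ is a $K$-submodule complementary to the kernel of the projection. The $2\varpi_i$-component of $S^2(\p_i)$ maps into $V(2\beta_i)$, since $\varpi_i$ is the restriction of a root conjugate to $\beta_i$. The multiplicity of $U_i$ in $V(2\beta_i)|_{K}$ must then be compared with the multiplicity in $S^2(\p_i)$. I expect this multiplicity bookkeeping, or equivalently exhibiting the explicit element $q$ and verifying it lies in $W_i$, to be the main obstacle. The case $E_8$ with its spinor quadrics is the hardest, and there I would use the branching $\mathfrak e_8=\mathfrak{so}_{16}\oplus S^+$ and $W_8|_{\Spin(16)}\supset\Lambda^4\C^{16}$ directly.

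\emph{Step 3: conclusion.} By the table and the remark following it, $\mathbf1\oplus U_i$ generates $I(\widehat X_i)$. Combined with Steps 1 and 2, the restrictions of $\mathbf1\oplus W_i$ generate $I(\widehat X_i)$. Now $\overline{\mathcal O}_{\min}$ is cut out by $\mathbf1\oplus W_i$ by \eqref{eq:minimal-orbit-ideal}. So the ideal of the scheme-theoretic intersection $\overline{\mathcal O}_{\min}\cap\p_i$ is generated by these restrictions, and it therefore equals the radical ideal $I(\widehat X_i)$. This proves that the intersection is reduced and equal to $\widehat X_i$.
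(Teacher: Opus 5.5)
Your Step 1, your treatment of the trivial summand, and Step 3 are sound and agree with the paper. One small repair in Step 1: for $E_6$ the compact Cartan subalgebra has rank $4$, so use the fundamental Cartan $\mathfrak t\oplus\mathfrak a$. The highest weight vector is still a root vector, because its $\mathfrak t$-weight space in $\mathfrak e_6$ is one-dimensional.

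\textbf{The gap is the $U_i$ part of Step 2, and more bookkeeping will not close it.} The inference ``$\alpha+\alpha'$ is not twice a root, so $q$ has no component in $V_{\mathfrak e_i}(2\beta_i)$'' is invalid. For $E_7$ and $E_8$ one has $\alpha\perp\alpha'$, so $\alpha+\alpha'$ is Weyl-conjugate to $\beta_i+\beta_i'$, the highest weight of $W_i$. That weight lies below $2\beta_i$, so it is also a weight of the Cartan square. For $E_7$ this weight space of $S^2(\mathfrak e_7)$ is $5$-dimensional (three dimensions from $S^2(\p_7)$, two from $S^2(\kalg_7)$), and $W_7$ meets it in one line.

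Worse, the conclusion $q\in W_i$ is false. An element of $S^2(\p_i)\subset S^2(\mathfrak e_i)$ is the function $X\mapsto q(X_{\p_i})$. So it lies in $\mathbf1\oplus W_i$ iff $q$ vanishes on the projection of $\mathcal O_{\min}$ to $\p_i$. Take a noncompact root $\gamma$ with $X_{\pm\gamma},H_\gamma$ an $\mathfrak{sl}_2$-triple. Then $iH_\gamma+X_\gamma+X_{-\gamma}$ is a nonzero nilpotent of that $\mathfrak{sl}_2$, hence lies in $\mathcal O_{\min}$, so the semisimple element $X_\gamma+X_{-\gamma}$ lies in the projection. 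For $E_7$ this element is, up to $K_{7,\C}$ and scaling, $e_{1234}+e_{5678}$. The quadrics in $U_7$ do not all vanish there: the $35$ independent zero-weight vectors $e_Ie_{I^c}$ are permuted up to sign by the Weyl group, so they cannot all lie in $V(2\omega_4)\oplus\mathbf1$, whose zero-weight space is only $15$-dimensional. Hence $U_7\cap(\mathbf1\oplus W_7)=0$. The vectors of $W_i$ that project onto $U_i$ must have nonzero $S^2(\kalg_i)$-components, so no quadric in noncompact root vectors alone can work.

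Your multiplicity fallback is not decisive either. The precise criterion is that the copy of $U_i$ in $S^2(\p_i)$ is not contained in $V_{\mathfrak e_i}(2\beta_i)$, and $U_7$ does occur (once) in $V_{\mathfrak e_7}(2\beta_7)|_{K_7}$.

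\textbf{The missing idea is the paper's zero-set argument, which needs no explicit vector.} By Step 1 and irreducibility, $Q_i=\operatorname{pr}_{\p_i}(\mathbf1\oplus W_i)$ is either $\C q_i$ or $\C q_i\oplus U_i$. Its common zero set is $\overline{\mathcal O}_{\min}\cap\p_i$. If $Q_i=\C q_i$, that zero set would be a quadric hypersurface in $\p_i$. The paper rules this out using the equality $\overline{\mathcal O}_{\min}\cap\p_i=\widehat X_i$ of closed sets (from Brylinski) together with $\operatorname{codim}\widehat X_i>1$.

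Your Step 1 only gives $\widehat X_i\subset\overline{\mathcal O}_{\min}\cap\p_i$, which is the wrong direction here: you need an upper bound on the zero set. An elementary substitute is available. A complexified Cartan subspace $\mathfrak a_\C\subset\p_i$ has dimension at least $2$, so the null quadric $\{q_i=0\}$ contains nonzero isotropic vectors of $\mathfrak a_\C$. These are semisimple, hence not in the nilpotent variety $\overline{\mathcal O}_{\min}$.
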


\begin{proof}
Put $Q_i=\operatorname{pr}_{\p_i}(\mathbf1\oplus W_i)$.  Since
$\mathbf1\oplus W_i$ generates the ideal of the minimal orbit closure
by \eqref{eq:minimal-orbit-ideal}, the common zero set of $Q_i$ is
\[
             \mathcal V(Q_i)
               =\overline{\mathcal O}_{\min}\cap\p_i
               =\widehat X_i.
\]
Here the last equality is the geometric identification of the minimal
orbit's intersection with $\p_i$ as the highest-weight orbit, with the
origin added; see \cite[Section~3]{Br}.  At this stage we only use this
identity as an equality of closed sets.

By \eqref{eq:quadratic-cone-table},
\[
               Q_i\subset I_2(\widehat X_i)=\C q_i\oplus E_i,
\]
where $q_i$ is the restricted Killing form and $E_i$ is the nontrivial
irreducible module of dimension $308$, $720$, or $1820$, respectively.
The restriction of the invariant quadratic form on $\mathfrak e_i$ to
$\p_i$ is nonzero, so $\C q_i\subset Q_i$.  Since $Q_i$ is
$K_{i,\C}$-stable and $E_i$ is irreducible, either $Q_i=\C q_i$ or
$Q_i=\C q_i\oplus E_i$.  The first possibility would make
$\mathcal V(Q_i)$ a quadric hypersurface in $\p_i$, whereas
$\widehat X_i$ has codimension greater than one in all three cases.
Thus $Q_i=I_2(\widehat X_i)$, proving
\eqref{eq:Joseph-to-Plucker}.

Finally, $I(\widehat X_i)$ is generated by its degree-two part.  Hence
the ideal generated by the restrictions of $\mathbf1\oplus W_i$ is
exactly $I(\widehat X_i)$, which also proves the assertion about the
scheme-theoretic intersection.
\end{proof}

By the Borel--Weil Theorem, the homogeneous coordinate ring of each cone is
\begin{equation}\label{eq:cone-ring}
       \C[\widehat X_i]
       =\bigoplus_{n\geq0}H^0(X_i,\mathcal O_{X_i}(n))
       \simeq\bigoplus_{n\geq0}V_{K_{i,\C}}(n\varpi_i)^\vee.
\end{equation}
These representations are self-dual, so the dual signs may
be suppressed when one records only the $K_i$-spectrum.

Let $v_0\in\Omega_i$ be a spherical vector and give $\Omega_i$ its cyclic
PBW filtration
\[
                   F_n\Omega_i=U_n(\mathfrak e_i)v_0.
\]
Since $\mathfrak k_i v_0=0$, commuting every $\mathfrak k_i$-factor to
the right shows that the associated graded module is cyclic over
$S(\p_i)$.  Thus there is a natural surjection
\begin{equation}\label{eq:symmetric-algebra-to-graded-minimal}
 S(\p_i)\longrightarrow\operatorname{gr}_F\Omega_i,
 \qquad 1\longmapsto\operatorname{gr}(v_0).
\end{equation}
If $z\in U_2(\mathfrak e_i)$ annihilates $\Omega_i$, its leading action
on the spherical vector is the projection to $S^2(\p_i)$ of its ordinary
PBW symbol.  Indeed, this is simply the quotient
\[
 S(\mathfrak e_i)\longrightarrow
 S(\mathfrak e_i)/S(\mathfrak e_i)\mathfrak k_i\simeq S(\p_i).
\]
The constant $c_i$ in $C_i-c_i$ disappears on passing to the associated
graded.  Proposition~\ref{prop:Joseph-to-Plucker} and
\eqref{eq:Joseph-generators} therefore show that every element of
$I_2(\widehat X_i)$ annihilates the cyclic vector in
$\operatorname{gr}_F\Omega_i$.  Since the ideal of $\widehat X_i$ is
generated by these quadrics, \eqref{eq:symmetric-algebra-to-graded-minimal}
factors through a graded surjection
\begin{equation}\label{eq:cone-ring-surjects-graded}
                 \C[\widehat X_i]\twoheadrightarrow
                 \operatorname{gr}_F\Omega_i.
\end{equation}

The $K_i$-spectrum of an associated graded module agrees with that of the
filtered module.  By the Brylinski--Kostant $K_i$-type formula in the
preceding section and the Borel--Weil formula \eqref{eq:cone-ring}, the two
sides of \eqref{eq:cone-ring-surjects-graded} contain precisely the same
irreducible $K_i$-modules, each with multiplicity one.  The surjection is
therefore an isomorphism.  We have proved, without assuming a geometric
model for the associated graded module, that
\begin{equation}\label{eq:associated-graded-minimal}
                    \operatorname{gr}_F\Omega_i
                    \simeq\C[\widehat X_i].
\end{equation}
In particular, $V_{K_i}(n\varpi_i)$ first occurs in filtration degree
$n$, so the cyclic PBW filtration agrees with the $K_i$-type filtration
\[
       (\Omega_i)_{\leq n}
       =\bigoplus_{0\leq j\leq n}V_{K_i}(j\varpi_i).
\]
This also proves
$\p_i(\Omega_i)_{\leq n}\subset(\Omega_i)_{\leq n+1}$ and gives the
claimed $S(\p_i)$-action intrinsically.

Since every ring in \eqref{eq:cone-ring} is generated in degree one, it is
cyclic over $\Sym(\p_i)$, with generator $1$ in degree zero.  Proposition
\ref{prop:filtered-generation} therefore gives the familiar conclusion:
each $\Omega_i$ is generated, as a $U(\mathfrak e_i)$-module, by a lift of
its trivial $K_i$-type, namely by its spherical vector.

\section{Finite generation of compact-type lifts}\label{sec:compact-lifts}

Fix one of the three Lie algebras of type $E_n$ and now suppress the index $i$.
Thus
\[
             \mathfrak e=\kalg\oplus\p,
             \qquad \Omega=\text{the spherical minimal module},
             \qquad R=\operatorname{gr}\Omega=\C[\widehat X].
\]
Let
\[
                       \g_1\oplus\g_2\subset\mathfrak e
\]
be a reductive subalgebra preserved by the Cartan involution.  Write
\[
 \g_j=\kalg_j\oplus\p_j,
 \qquad K_j\subset K,
 \qquad j=1,2,
\]
for the induced Cartan decompositions and maximal compact groups.  The two
factors commute, so $K_1$ acts on every $K_2$-invariant or
$K_2$-covariant space.

Let $v_0\in\Omega$ be the spherical vector.  We impose the following
invariant-cyclicity hypothesis: the map
\begin{equation}\label{eq:invariant-cyclicity}
 U(\g_1)\longrightarrow\Omega^{K_2},
 \qquad u\longmapsto u v_0,
\end{equation}
is a strict filtered surjection.  Here $U(\g_1)$ has the relative PBW
filtration, in which $\kalg_1$ has degree zero and $\p_1$ has degree one,
and $\Omega^{K_2}$ has the filtration induced from $\Omega$.  Strictness is
precisely what permits passage to the associated graded.  Since $v_0$ is
$K_1$-fixed, PBW reduces the cyclic associated graded source to
$\Sym(\p_1)$.  Compactness of $K_2$ gives
$\operatorname{gr}(\Omega^{K_2})\simeq
(\operatorname{gr}\Omega)^{K_2}=R^{K_2}$; moreover, under
$\operatorname{gr}\Omega\simeq R$, the symbol action of $\p$ is
multiplication by linear functions.  Thus
\eqref{eq:invariant-cyclicity} gives a surjection of graded algebras
\begin{equation}\label{eq:graded-invariant-cyclicity}
                  \Sym(\p_1)\twoheadrightarrow R^{K_2}.
\end{equation}
Equivalently, one may take \eqref{eq:graded-invariant-cyclicity} as the
graded form of the invariant-cyclicity hypothesis.  In applications it is
usually this map that is calculated explicitly.

Let $\tau$ be an irreducible $K_2$-module and define its graded covariant
space by
\begin{equation}\label{eq:covariant-module}
 M_\tau=(R\otimes\tau^\vee)_{K_2}.
\end{equation}
Because $K_2$ is compact, averaging identifies coinvariants and invariants,
and hence
\begin{equation}\label{eq:covariant-identifications}
 M_\tau\simeq(R\otimes\tau^\vee)^{K_2}
       \simeq\operatorname{Hom}_{K_2}(\tau,R).
\end{equation}
Thus $M_\tau$ is the multiplicity, or covariant, space of $\tau$ in $R$.
For the trivial representation ${\bf1}$ this reduces to
\[
                              M_{\bf1}=R^{K_2}.
\]

We prove the finiteness statement needed here by an elementary averaging
argument.  For general invariant-theoretic background, see
\cite[Chapter~1]{MFK}.

\begin{proposition}[Finiteness of covariants]\label{prop:covariant-finiteness}
For every finite-dimensional $K_2$-module $\tau$, the covariant module
$M_\tau$ is finitely generated over $R^{K_2}$.
\end{proposition}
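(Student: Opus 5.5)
We will use the standard Hilbert-style argument with a Reynolds operator. Since $K_2$ is compact, its complexification $K_{2,\C}$ is reductive, and $R=\C[\widehat X]$ is a finitely generated graded $\C$-algebra on which $K_2$ acts locally finitely by graded algebra automorphisms. Fix a finite-dimensional $K_2$-module $\tau$ and put $A=R\otimes\tau^\vee$. This is a graded $R$-module (with $R$ acting on the first factor), and it is finitely generated over $R$ because $\tau^\vee$ is finite-dimensional. By \eqref{eq:covariant-identifications} it suffices to show that $A^{K_2}$ is finitely generated over $R^{K_2}$.

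Let $\rho:A\to A^{K_2}$ denote the averaging operator, $\rho(a)=\int_{K_2}k\cdot a\,dk$, with $dk$ the normalized Haar measure. Local finiteness makes $\rho$ well defined, and it is the projection onto the isotypic component of the trivial type. The key property is that $\rho$ is $R^{K_2}$-linear: for $f\in R^{K_2}$ and $a\in A$ we have $k\cdot(fa)=f\,(k\cdot a)$, so $\rho(fa)=f\rho(a)$. It also preserves degree, and it restricts to the identity on $A^{K_2}$.

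Now let $N\subset A$ be the $R$-submodule generated by the elements of $A^{K_2}$ of positive degree, together with the degree-zero part. Since $R$ is Noetherian and $A$ is a finitely generated $R$-module, $N$ is finitely generated. Moreover, we may choose finitely many generators $a_1,\dots,a_m\in A^{K_2}$, which are homogeneous. Any homogeneous $a\in A^{K_2}$ can be written as $a=\sum r_j a_j$ with $r_j\in R$ homogeneous of degree $\deg a-\deg a_j$. Applying $\rho$ gives $a=\sum\rho(r_j)\,a_j$, where $\rho(r_j)\in R^{K_2}$. Here we use that $\rho(r_ja_j)=\rho(r_j)a_j$ because $a_j$ is invariant, which is the same linearity argument with the roles of the factors exchanged. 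Hence the $a_j$ generate $A^{K_2}$ over $R^{K_2}$. No degree induction is even needed, since $N$ already contains all of $A^{K_2}$.

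The only point requiring care is the justification of the two linearity identities and of local finiteness. Local finiteness holds because each graded piece $R_n\simeq V(n\varpi)^\vee$ is a finite-dimensional $K$-module, hence a finite-dimensional $K_2$-module. The identities are immediate from the fact that $K_2$ acts on $R$ by algebra automorphisms. We do not expect any genuine obstacle. In particular, we do not need to know separately that $R^{K_2}$ is finitely generated, although the same argument with $\tau$ trivial shows that it is.
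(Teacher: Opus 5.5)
Your argument is correct and is essentially the paper's proof: Noetherianity gives finitely many homogeneous invariants generating $R\cdot A^{K_2}$ over $R$, and averaging their coefficients over $K_2$ puts those coefficients in $R^{K_2}$. The paper routes the Noetherian step through the ideal generated by $B_1^{K_2}$ in $B=R\otimes\Sym(\tau^\vee)$ and the Hilbert basis theorem, whereas you use directly that the finitely generated $R$-module $A$ is Noetherian; also, define $N$ simply as $R\cdot A^{K_2}$, since adding ``the degree-zero part'' of $A$ would spoil the choice of invariant generators.
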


\begin{proof}
Let $B=R\otimes\Sym(\tau^\vee)$, graded by symmetric degree, so that
$B_0=R$ and $B_1=R\otimes\tau^\vee$.  Let $I\subset B$ be the ideal
generated by $B_1^{K_2}$.  Since $B$ is a finitely generated complex
algebra, the Hilbert basis theorem implies that $I$ is generated by a
finite subset $f_1,\ldots,f_m$ of $B_1^{K_2}$.

For $f\in B_1^{K_2}$, express $f$ in this ideal and take symmetric degree
one to obtain $f=\sum_i r_i f_i$ with $r_i\in B_0=R$.  Averaging over
the compact group $K_2$, since $f$ and the $f_i$ are invariant, gives
\[
                f=\sum_i\left(\int_{K_2} k\cdot r_i\,dk\right)f_i,
\]
where $dk$ is normalized Haar measure.  The averaged coefficients belong
to $R^{K_2}$, so the $f_i$ generate $M_\tau\simeq B_1^{K_2}$ over
$R^{K_2}$.
\end{proof}

Combining Proposition~\ref{prop:covariant-finiteness} with
\eqref{eq:graded-invariant-cyclicity}, we see that $M_\tau$ is finitely
generated over $\Sym(\p_1)$.  Since this module is $K_1$-finite and $K_1$
commutes with $K_2$, its homogeneous generators may be chosen to form a
finite-dimensional graded $K_1$-module; denote one such generating module
by $F_\tau$.

Now form the compact-type lift
\begin{equation}\label{eq:compact-type-lift}
                \Theta_1(\tau)
                =(\Omega\otimes\tau^\vee)_{K_2}.
\end{equation}
It is naturally a $(\g_1,K_1)$-module.  Give it the quotient filtration
induced from the filtration of $\Omega$.  Exactness of $K_2$-coinvariants
for a compact group gives
\begin{equation}\label{eq:graded-compact-type-lift}
 \operatorname{gr}\Theta_1(\tau)
    \simeq(\operatorname{gr}\Omega\otimes\tau^\vee)_{K_2}
    =(R\otimes\tau^\vee)_{K_2}
    =M_\tau.
\end{equation}

\begin{theorem}[Finite generation of compact-type lifts]
\label{thm:finite-generation-lifts}
Assume the graded invariant-cyclicity map
\eqref{eq:graded-invariant-cyclicity} is surjective.  Then, for every
irreducible $K_2$-type $\tau$, the $(\g_1,K_1)$-module
$\Theta_1(\tau)$ is finitely generated over $U(\g_1)$.  More precisely, if
$\widetilde F_\tau\subset\Theta_1(\tau)$ is any $K_1$-stable space of
filtered lifts of $F_\tau\subset M_\tau$, then multiplication gives a
surjection
\begin{equation}\label{eq:compact-induced-surjection}
 U(\g_1)\otimes_{U(\kalg_1)}\widetilde F_\tau
                \twoheadrightarrow\Theta_1(\tau).
\end{equation}
\end{theorem}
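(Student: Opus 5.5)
The plan is to apply Proposition~\ref{prop:filtered-generation} to the Harish--Chandra pair $(\g_1,K_1)$ and the module $V=\Theta_1(\tau)$, with the quotient filtration $V_n$ given by the image of $\Omega_{\leq n}\otimes\tau^\vee$. The whole proof then consists of checking the hypotheses of that proposition.

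First I verify that the filtration is admissible. Each $\Omega_{\leq n}$ is $K$-stable, hence $K_1\times K_2$-stable. Since $K_1$ commutes with $K_2$, the image $V_n$ is $K_1$-stable. By Section~\ref{sec:cones} we have $\p\,\Omega_{\leq n}\subset\Omega_{\leq n+1}$. Since $\p_1\subset\p$ and $\g_1$ commutes with $K_2$, this gives $\p_1V_n\subset V_{n+1}$. The filtration is exhaustive and starts at $0$.

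Second, I identify the associated graded module. By \eqref{eq:graded-compact-type-lift}, $\operatorname{gr}V\simeq M_\tau$ as graded spaces. I must check that this isomorphism intertwines the $\Sym(\p_1)$-action from the filtration with the action on $M_\tau$ obtained by multiplying $R$ by linear functions in $\p_1\subset\p$. This holds because under $\operatorname{gr}\Omega\simeq R$ the symbol action of $\p$ is multiplication. The $K_2$-coinvariant functor is exact and commutes with the $\p_1$-action, so the identification is compatible with the module structures.

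Third, I establish finite generation of $\operatorname{gr}V$ over $\Sym(\p_1)$. Proposition~\ref{prop:covariant-finiteness} makes $M_\tau$ finitely generated over $R^{K_2}$. The $R^{K_2}$-action on $M_\tau$ is multiplication in $R$. The surjection \eqref{eq:graded-invariant-cyclicity} is a map of algebras under which elements of $\p_1$ go to their images as linear functions in $R^{K_2}$, so the $\Sym(\p_1)$-action factors through $R^{K_2}$. Hence the same finite set generates $M_\tau$ over $\Sym(\p_1)$. We can take homogeneous generators, since $M_\tau$ is graded. Their $K_1$-span is finite-dimensional, because $M_\tau$ is $K_1$-finite and each graded piece is finite-dimensional. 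This span is the $F_\tau$ of the text. Proposition~\ref{prop:filtered-generation} then yields $V=U(\g_1)\widetilde F_\tau$ for any $K_1$-stable lifts, together with the surjection \eqref{eq:compact-induced-surjection}.

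The main obstacle is the compatibility check in the second step. Specifically, the $\p_1$-action on the quotient filtration of the coinvariants must correspond, under the exact-coinvariant identification, to multiplication on $M_\tau$ by the images of $\p_1$ in $R^{K_2}$. One also needs the graded pieces of $R$ to be finite-dimensional, so that $K_1$-finiteness gives finite-dimensional generating spaces. That finite-dimensionality is clear, since $R_n\simeq V_K(n\varpi)$.
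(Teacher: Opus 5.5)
Your proposal is correct and follows the paper's proof: you give $\Theta_1(\tau)$ the quotient filtration and identify its associated graded with $M_\tau$ by exactness of $K_2$-coinvariants. You then use Proposition~\ref{prop:covariant-finiteness} and the surjection $\Sym(\p_1)\twoheadrightarrow R^{K_2}$ to obtain a finite homogeneous $K_1$-stable generating space, and conclude with Proposition~\ref{prop:filtered-generation}. The paper states this in two lines. Your additional checks (that the filtration is $K_1$-stable, that $\p_1V_n\subset V_{n+1}$, and that the symbol action agrees with multiplication in $R$) are the verifications the paper leaves implicit.
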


\begin{proof}
Proposition~\ref{prop:covariant-finiteness} and
\eqref{eq:graded-invariant-cyclicity} show that the associated graded
module in \eqref{eq:graded-compact-type-lift} is generated over
$\Sym(\p_1)$ by the finite-dimensional homogeneous $K_1$-module $F_\tau$.
Proposition~\ref{prop:filtered-generation}, applied to the filtered
$(\g_1,K_1)$-module $\Theta_1(\tau)$, now gives
\eqref{eq:compact-induced-surjection}.
\end{proof}

\section{From compact-type lifts to big theta lifts}\label{sec:big-theta}

Let $\pi_2$ be an irreducible $(\g_2,K_2)$-module.  We use the algebraic
definition
\begin{equation}\label{eq:big-theta-definition}
       \Theta(\pi_2)
       =(\Omega\otimes\pi_2^\vee)_{(\g_2,K_2)}.
\end{equation}
Equivalently, $\Theta(\pi_2)\boxtimes\pi_2$ is the maximal
$\pi_2$-isotypic quotient of $\Omega$.  In particular, there is a natural
surjection
\begin{equation}\label{eq:big-theta-quotient}
            \Omega\twoheadrightarrow
            \Theta(\pi_2)\boxtimes\pi_2.
\end{equation}

Choose a Vogan-minimal $K_2$-type $\tau$ of $\pi_2$, and put
\[
                    E_\tau=\operatorname{Hom}_{K_2}(\tau,\pi_2).
\]
This is a nonzero finite-dimensional vector space.  Since taking a fixed
$K_2$-isotypic component is exact, applying
$\operatorname{Hom}_{K_2}(\tau,-)$ to
\eqref{eq:big-theta-quotient} gives a surjection of $\g_1$-modules
\begin{equation}\label{eq:minimal-type-quotient}
 \Theta_1(\tau)=\operatorname{Hom}_{K_2}(\tau,\Omega)
   \twoheadrightarrow \Theta(\pi_2)\otimes E_\tau.
\end{equation}
Choose a nonzero linear functional $E_\tau\to\C$.  Composing it with
\eqref{eq:minimal-type-quotient} gives a surjection
\begin{equation}\label{eq:compact-to-big-quotient}
                 \Theta_1(\tau)\twoheadrightarrow\Theta(\pi_2).
\end{equation}
Thus only the occurrence of $\tau$ in $\pi_2$ is needed for this argument;
choosing a Vogan-minimal type is the natural choice for applications to
theta correspondence.

Assume the graded invariant-cyclicity hypothesis
\[
                         \Sym(\p_1)\twoheadrightarrow R^{K_2}.
\]
By Theorem~\ref{thm:finite-generation-lifts}, the compact-type lift in
\eqref{eq:minimal-type-quotient} is a quotient of the induced module
\[
 U(\g_1)\otimes_{U(\kalg_1)}\widetilde F_\tau.
\]
Together with \eqref{eq:compact-to-big-quotient}, this gives
\begin{equation}\label{eq:induced-to-big-theta}
 U(\g_1)\otimes_{U(\kalg_1)}\widetilde F_\tau
       \twoheadrightarrow\Theta(\pi_2).
\end{equation}
In particular, the big theta lift is finitely generated over $U(\g_1)$.

Admissibility of $\Theta(\pi_2)$ comes from applying the compact-type argument 
in the opposite direction.  
Assume also the reverse graded invariant-cyclicity hypothesis
\begin{equation}\label{eq:reverse-graded-invariant-cyclicity}
                         \Sym(\p_2)\twoheadrightarrow R^{K_1}.
\end{equation}
For an irreducible $K_1$-type $\sigma$, put
\begin{equation}\label{eq:reverse-compact-lift}
 \Theta_2(\sigma)
   =(\Omega\otimes\sigma^\vee)_{K_1}
   \simeq\operatorname{Hom}_{K_1}(\sigma,\Omega).
\end{equation}
Theorem~\ref{thm:finite-generation-lifts}, with the indices interchanged,
shows that $\Theta_2(\sigma)$ is finitely generated over $U(\g_2)$.

The see-saw adjunction gives
\begin{equation}\label{eq:seesaw-adjunction}
 \operatorname{Hom}_{K_1}
       (\sigma,\Theta(\pi_2))^\vee
 \simeq
 \operatorname{Hom}_{(\g_2,K_2)}
       (\Theta_2(\sigma),\pi_2).
\end{equation}
Indeed, the left-hand multiplicity space before taking its dual is
\[
 \bigl(\operatorname{Hom}_{K_1}(\sigma,\Omega)
       \otimes\pi_2^\vee\bigr)_{(\g_2,K_2)},
\]
and dualizing the coinvariants gives the right-hand Hom space in
\eqref{eq:seesaw-adjunction}.

Choose a finite-dimensional $K_2$-stable generating space $D_\sigma$ for
$\Theta_2(\sigma)$.  Restriction to $D_\sigma$ gives an injection
\begin{equation}\label{eq:hom-injection}
 \operatorname{Hom}_{(\g_2,K_2)}
       (\Theta_2(\sigma),\pi_2)
 \lhook\joinrel\longrightarrow
 \operatorname{Hom}_{K_2}(D_\sigma,\pi_2).
\end{equation}
The space on the right is finite dimensional because $\pi_2$ is
admissible.  Equations \eqref{eq:seesaw-adjunction} and
\eqref{eq:hom-injection} therefore show that every $K_1$-type has finite
multiplicity in $\Theta(\pi_2)$.

We have proved the following.

\begin{theorem}[Big theta lifts]\label{thm:big-theta-finiteness}
Let $\pi_2$ be an irreducible $(\g_2,K_2)$-module.
\begin{enumerate}
\item If $\Sym(\p_1)\twoheadrightarrow R^{K_2}$, then
$\Theta(\pi_2)$ is finitely generated over $U(\g_1)$; more precisely,
\eqref{eq:induced-to-big-theta} holds for any Vogan-minimal $K_2$-type
$\tau$ of $\pi_2$.
\item If
$\Sym(\p_2)\twoheadrightarrow R^{K_1}$, then $\Theta(\pi_2)$ is an
admissible $(\g_1,K_1)$-module.
\end{enumerate}
Consequently, under the two symmetric hypotheses, every nonzero big theta
lift is a finitely generated admissible $(\g_1,K_1)$-module and hence a
Harish--Chandra module of finite length.
\end{theorem}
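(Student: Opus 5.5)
Parts (1) and (2) have essentially been established in the discussion preceding the statement, so the plan is to assemble them and then add the standard finiteness input for the final sentence.

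For (1), fix a Vogan-minimal $K_2$-type $\tau$ of $\pi_2$. Exactness of the $\tau$-isotypic functor for compact $K_2$, applied to \eqref{eq:big-theta-quotient}, gives \eqref{eq:minimal-type-quotient}, and a nonzero functional on $E_\tau$ yields \eqref{eq:compact-to-big-quotient}. One should check that this composite is $(\g_1,K_1)$-equivariant. This holds because $\g_1$ and $K_1$ commute with $K_2$ and act only on the first tensor factor of $\Theta(\pi_2)\boxtimes\pi_2$. Theorem~\ref{thm:finite-generation-lifts} then applies under $\Sym(\p_1)\twoheadrightarrow R^{K_2}$, and composing gives \eqref{eq:induced-to-big-theta}. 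The source there is generated by the finite-dimensional space $\widetilde F_\tau$, so $\Theta(\pi_2)$ is finitely generated over $U(\g_1)$.

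For (2), fix a $K_1$-type $\sigma$. Theorem~\ref{thm:finite-generation-lifts}, with indices swapped, uses \eqref{eq:reverse-graded-invariant-cyclicity} to give finite generation of $\Theta_2(\sigma)$ over $U(\g_2)$, with a finite-dimensional $K_2$-stable generating space $D_\sigma$. The key step is the adjunction \eqref{eq:seesaw-adjunction}, and I would verify it carefully. First, $\operatorname{Hom}_{K_1}(\sigma,-)$ commutes with $(\g_2,K_2)$-coinvariants, since $\g_1$ and $\g_2$ commute and the $\sigma$-isotypic functor is exact. Hence
\[
\operatorname{Hom}_{K_1}(\sigma,\Theta(\pi_2))\simeq(\Theta_2(\sigma)\otimes\pi_2^\vee)_{(\g_2,K_2)} .
\]
Second, the algebraic dual of a coinvariant space is $\operatorname{Hom}_{(\g_2,K_2)}(\Theta_2(\sigma),(\pi_2^\vee)^*)$. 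Its $K_2$-finite part is $\pi_2$, and any $(\g_2,K_2)$-map from a $K_2$-finite module lands in the $K_2$-finite vectors, so the dual equals $\operatorname{Hom}_{(\g_2,K_2)}(\Theta_2(\sigma),\pi_2)$. The injection \eqref{eq:hom-injection} holds because $D_\sigma$ generates $\Theta_2(\sigma)$. Its target is finite dimensional, since $D_\sigma$ meets finitely many $K_2$-types and $\pi_2$ is admissible. A vector space whose dual is finite dimensional is itself finite dimensional, so every $K_1$-multiplicity in $\Theta(\pi_2)$ is finite.

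For the final assertion, a finitely generated admissible $(\g_1,K_1)$-module is by definition a Harish--Chandra module. I expect finite length to be the main obstacle: admissibility and finite generation alone do not bound the length unless some $Z(\g_1)$-finiteness is available. My first attempt would be to show directly that $\Theta(\pi_2)$ is $Z(\g_1)$-finite. The group $Z(\g_2)$ acts on $\pi_2$ by a character $\chi$, so $\Theta(\pi_2)$ is a quotient of $\Omega/\ker(\chi)\Omega$. Because the image of $Z(\g_1)\otimes Z(\g_2)$ in $U(\mathfrak e)/J$ is controlled by the Joseph ideal, one expects $Z(\g_1)$ to act through a finite quotient; this is the known correspondence of infinitesimal characters mentioned in the introduction. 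As a fallback that avoids this input, I would use that $\Theta(\pi_2)$ is a quotient of $U(\g_1)\otimes_{U(\kalg_1)}\widetilde F_\tau$, whose $K_1$-types grow polynomially. Combined with admissibility, this should bound the Gelfand--Kirillov dimension, but that alone does not give finite length. So I would rely on $Z(\g_1)$-finiteness and then invoke Harish-Chandra's theorem that a finitely generated, admissible, $Z(\g_1)$-finite module has finite length.
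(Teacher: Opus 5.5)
Parts (1) and (2) of your proposal are correct and follow the paper's argument. Your explicit check of the see-saw adjunction spells out what the paper states in one line: $\operatorname{Hom}_{K_1}(\sigma,-)$ commutes with $(\g_2,K_2)$-coinvariants, and the algebraic dual of the coinvariants lands in the $K_2$-finite vectors $\pi_2$.

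The gap is in the final assertion. You claim that finite generation and admissibility cannot give finite length without a separate $Z(\g_1)$-finiteness input. You then take that input from the correspondence of infinitesimal characters, which you only ``expect'' from the Joseph ideal. That correspondence is not a hypothesis of this theorem. The paper assumes it only in Theorem~\ref{thm:one-sided-finite-length}, where one of the two invariant maps is unavailable. As written, your proof of finite length therefore depends on an unproved external input, and your Gelfand--Kirillov fallback, as you note, does not help.

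The missing observation is that $Z(\g_1)$-finiteness follows automatically from (1) and (2):
\begin{itemize}
\item By (1), $\Theta(\pi_2)$ is generated over $U(\g_1)$ by a finite-dimensional $K_1$-stable subspace $F$, namely the image of $\widetilde F_\tau$.
\item $F$ lies in a finite sum $W$ of $K_1^\circ$-isotypic components. By (2), $W$ is finite dimensional, since $K_1/K_1^\circ$ is finite and $\Theta(\pi_2)$ is $K_1$-admissible.
\item $Z(\g_1)$ commutes with $\kalg_1$, so it preserves each such isotypic component and hence $W$.
\item The kernel $I$ of $Z(\g_1)\to\operatorname{End}(W)$ therefore has finite codimension. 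Being central, $I$ annihilates $U(\g_1)F=\Theta(\pi_2)$.
\end{itemize}
Harish-Chandra's theorem on finitely generated $Z(\g_1)$-finite $(\g_1,K_1)$-modules then gives finite length. This is the standard fact behind the paper's phrase ``hence a Harish--Chandra module of finite length'': every finitely generated admissible $(\g,K)$-module has finite length. No infinitesimal-character correspondence is needed under the two symmetric hypotheses.
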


The two hypotheses in Theorem~\ref{thm:big-theta-finiteness} are
symmetric, but they are not both available for all the exceptional dual
pairs considered below.  The point needed for the main theorem is that a
single invariant map controls the two opposite lifting directions.

\begin{theorem}[One side controls both directions]
\label{thm:one-sided-finite-length}
Assume that the correspondence of infinitesimal characters is available
for the dual pair $\g_1\oplus\g_2$.  If the single map
\begin{equation}\label{eq:one-sided-map}
                  \Sym(\p_1)\twoheadrightarrow R^{K_2}
\end{equation}
is surjective, then the big theta lift of every irreducible
Harish--Chandra module has finite length in both directions.
\end{theorem}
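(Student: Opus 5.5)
We prove the two directions separately from the single surjection \eqref{eq:one-sided-map}.

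\emph{Lifting from $\g_2$ to $\g_1$.} Let $\pi_2$ be an irreducible $(\g_2,K_2)$-module. Part (1) of Theorem~\ref{thm:big-theta-finiteness} shows that $\Theta(\pi_2)$ is finitely generated over $U(\g_1)$. Admissibility is not yet available, since the reverse hypothesis is not assumed. We replace it with an infinitesimal-character argument. The correspondence of infinitesimal characters says that $Z(\g_1)$ acts on $\Theta(\pi_2)$ through a character determined by the infinitesimal character of $\pi_2$; concretely, $\Omega$ is annihilated by the Joseph ideal, and the images of $Z(\g_1)$ and $Z(\g_2)$ in $U(\mathfrak e)/J$ are correlated. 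If the correspondence only gives a finite set of possible characters, or a polynomial relation, then $\Theta(\pi_2)$ is annihilated by an ideal of finite codimension in $Z(\g_1)$. A finitely generated $(\g_1,K_1)$-module that is $Z(\g_1)$-finite is admissible by Harish--Chandra's theorem, and hence has finite length. So the forward direction needs only part (1) together with $Z(\g_1)$-finiteness.

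\emph{Lifting from $\g_1$ to $\g_2$.} Let $\pi_1$ be an irreducible $(\g_1,K_1)$-module and consider $\Theta(\pi_1)=(\Omega\otimes\pi_1^\vee)_{(\g_1,K_1)}$. The strategy is the mirror image: get admissibility from \eqref{eq:one-sided-map}, and get finite generation from infinitesimal characters.

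For admissibility, run the see-saw argument of Section~\ref{sec:big-theta} with the roles of the indices reversed. For a $K_2$-type $\tau$,
\[
\operatorname{Hom}_{K_2}(\tau,\Theta(\pi_1))^\vee\simeq\operatorname{Hom}_{(\g_1,K_1)}(\Theta_1(\tau),\pi_1).
\]
Theorem~\ref{thm:finite-generation-lifts} says $\Theta_1(\tau)$ is generated by a finite-dimensional $K_1$-stable space $\widetilde F_\tau$. Restriction to $\widetilde F_\tau$ therefore embeds the right-hand side into $\operatorname{Hom}_{K_1}(\widetilde F_\tau,\pi_1)$, which is finite dimensional because $\pi_1$ is admissible. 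Hence $\Theta(\pi_1)$ is admissible.

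For finite generation, use the same infinitesimal-character correspondence: $Z(\g_2)$ acts on $\Theta(\pi_1)$ through finitely many characters. Two tools are available.
\begin{itemize}
\item An admissible $Z(\g_2)$-finite module decomposes into $Z$-generalized eigenspaces.
\item By Harish--Chandra's finiteness theorem, there are only finitely many irreducible Harish--Chandra modules with a given infinitesimal character, and each $K_2$-type has finite multiplicity.
\end{itemize}
Fix one $K_2$-type in each of those finitely many irreducibles. The finitely many isotypic components $\Theta(\pi_1)[\tau]$ are finite dimensional, and they generate a submodule $W$. Any nonzero quotient $\Theta(\pi_1)/W$ would have an irreducible subquotient among the listed ones. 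That subquotient contains the chosen $\tau$, which is impossible since that isotypic part has been killed. The argument must be arranged as a finite-length count (descending through the generalized eigenspaces, or using that $\operatorname{Hom}_{K_2}(\tau,\cdot)$ applied to an admissible module with bounded multiplicities yields finite length), so that the conclusion does not depend on already knowing finite length. Once this is in place, $\Theta(\pi_1)$ is finitely generated, hence of finite length.

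\emph{Main obstacle.} The delicate step is the last one: deducing finite length of an admissible, $Z$-finite module without assuming finite generation. I would use the standard fact that admissible $Z(\g)$-finite $(\g,K)$-modules have finite length. Its proof bounds the length by the total multiplicity of the finitely many minimal $K$-types of the finitely many irreducibles with the given infinitesimal characters, and I would invoke it directly. A secondary point is the exact form of the infinitesimal-character correspondence. It must give finiteness of $Z$-action in both directions, meaning an annihilating ideal of finite codimension, not merely a statement about irreducible quotients. I would state this as the precise assumption being used.
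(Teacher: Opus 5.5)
Your proposal is correct and follows essentially the same route as the paper. The forward direction combines part (1) of Theorem~\ref{thm:big-theta-finiteness} with the infinitesimal character and Harish--Chandra's finiteness theorem. The reverse direction gets admissibility from the see-saw argument, driven by the same surjection $\Sym(\p_1)\twoheadrightarrow R^{K_2}$. It then gets finite length by bounding against the multiplicities of one $K_2$-type chosen in each of the finitely many irreducibles with the given infinitesimal character.

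The only difference is cosmetic. The paper takes a finitely generated submodule of maximal length, while you show that the chosen isotypic components generate the whole lift. Your hedging about circularity there is unnecessary: any nonzero $Z(\g_2)$-finite quotient contains a cyclic, hence finite-length, submodule, so it has an irreducible subquotient without assuming finite length of the lift.
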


\begin{proof}
Let $\pi_2$ be an irreducible $(\g_2,K_2)$-module.  In the direction
from $\g_2$ to $\g_1$, \eqref{eq:one-sided-map} is the hypothesis of
Theorem~\ref{thm:big-theta-finiteness}(1), so $\Theta(\pi_2)$ is finitely
generated over $U(\g_1)$.  By the infinitesimal-character
correspondence it has an infinitesimal character.  The standard
Harish--Chandra finiteness theorem therefore makes it admissible and of
finite length; see \cite[Theorems~3.4.1 and~4.2.1]{WallachI}.

Now let $\pi_1$ be an irreducible $(\g_1,K_1)$-module and lift in the
opposite direction.  After interchanging the indices in
Theorem~\ref{thm:big-theta-finiteness}(2), the very same map
\eqref{eq:one-sided-map} is the reverse invariant map.  Hence the lift of
$\pi_1$ to $\g_2$ is admissible.  It again has an infinitesimal character.
There are only finitely many irreducible $(\g_2,K_2)$-modules with that
infinitesimal character \cite[Theorem~5.5.6]{WallachI}.  Choose one
$K_2$-type from each of them.  Admissibility bounds the sum of the
multiplicities of these finitely many types, and hence uniformly bounds
the length of every finitely generated submodule of the lift.  A
finitely generated submodule of maximal length is the whole lift.  Thus
the reverse lift also has finite length.
\end{proof}

\section{Checking invariant cyclicity for split dual pairs}
\label{sec:invariant-cyclicity}

The hypothesis in Theorem~\ref{thm:one-sided-finite-length} is usually
most accessible when $\g_1$ is the smaller member of the dual pair.  Let
$v_0$ again denote the spherical vector in $\Omega$.  The action map
factors through the universal spherical module
\begin{equation}\label{eq:spherical-action-map}
 \beta_1:\mathcal I_1
   :=U(\g_1)\otimes_{U(\kalg_1)}{\bf1}
   \longrightarrow \Omega^{K_2},
 \qquad u\otimes1\longmapsto u v_0.
\end{equation}
Give $\mathcal I_1$ the relative PBW filtration.  PBW identifies it, as a
filtered vector space, with $\Sym(\p_1)$.  Compactness of $K_2$ gives
\[
 \operatorname{gr}(\Omega^{K_2})
       =R^{K_2},
\]
and the associated graded map of \eqref{eq:spherical-action-map} is the
graded algebra homomorphism
\begin{equation}\label{eq:alpha-one}
 \alpha_1:\Sym(\p_1)\longrightarrow R^{K_2},
 \qquad f\longmapsto f\cdot1,
\end{equation}
where $1\in R_0$ is the symbol of $v_0$.

The injectivity required below is supplied by the following analytic
input.  It is here that matrix-coefficient decay enters the proof.

\begin{proposition}[Restriction below $L^2$ and injectivity]
\label{prop:smaller-side-injectivity}
Let $G_1$ be the smaller member of a split dual pair in
\eqref{eq:centralizer-table}, excluding the symmetric $D_4\times D_4$
pair in $E_8$.  There is an $\varepsilon>0$ such that every
$K_H$-finite matrix coefficient $M$ of the minimal representation of
$H$ satisfies
\begin{equation}\label{eq:below-L2}
                         M|_{G_1}\in L^{2-\varepsilon}(G_1).
\end{equation}
Consequently the filtered action map $\beta_1$ in
\eqref{eq:spherical-action-map} is injective.
\end{proposition}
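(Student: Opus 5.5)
The plan has two parts: an analytic estimate for the matrix coefficients, and a deduction of injectivity from it.

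\emph{The estimate.} Since $\Omega$ is generated by $v_0$, every $K_H$-finite matrix coefficient is a finite sum of derivatives of the spherical function $\phi(h)=\langle hv_0,v_0\rangle$. Derivatives of a $K_H$-finite coefficient obey the same exponential bounds on the positive chamber as $\phi$ itself. So it suffices to bound $\phi$ on $A_1^+\subset G_1$, where $A_1$ is a maximal split torus of $G_1$.

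\begin{enumerate}
\item Take $A_1$ inside a maximal split torus $A_H$ of $H$. Write $\rho_H$ and $\rho_1$ for the half-sums of positive roots, with positive systems chosen compatibly.
\item Quote the known leading exponents of the minimal representation: $|\phi(\exp X)|\le C(1+|X|)^N e^{-\lambda(X)}$ on $\mathfrak a_H^+$. Here $\lambda$ is the smallest exponent. It can be read off from the realization of $\Omega$ as the spherical constituent of a degenerate principal series induced from the maximal parabolic attached to $\mathcal O_{\min}$, or from Vogan's computation of the minimal-representation exponents.
\item Restrict to $\mathfrak a_1^+$. By the Cartan integral formula on $G_1$, membership in $L^{p}(G_1)$ with $p=2-\varepsilon$ follows once $p\,\lambda(X)>2\rho_1(X)$ for all nonzero $X\in\overline{\mathfrak a_1^+}$, i.e.\ once $\lambda|_{\mathfrak a_1}>\rho_1$ strictly on the closed cone.
\item Check this linear inequality case by case for the twelve smaller members, namely the $A_1$, $A_2$, $G_2$ and $D_4$ entries together with the $F_4$ centralizers where these are smaller. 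This comes down to restricting weights along the explicit embeddings, which can be tabulated.
\end{enumerate}

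I expect step 4 to be the main obstacle: the inequality must be verified on the whole closed chamber, including the boundary walls. The excluded $D_4\times D_4\subset E_8$ pair is presumably exactly where the inequality becomes equality along some direction. I would first test the inequality on the fundamental coweights of $\mathfrak a_1$; since both sides are linear, checking these extremal rays suffices.

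\emph{Injectivity.} Suppose $0\ne u\otimes1\in\mathcal I_1$ with $uv_0=0$.
\begin{enumerate}
\item The kernel of $\beta_1$ is a $(\g_1,K_1)$-submodule of $\mathcal I_1$, and it is nonzero by assumption.
\item Consider the function $\Phi(g)=\langle gv_0,v_0\rangle$ on $G_1$. It is bi-$K_1$-invariant and lies in $L^{2-\varepsilon}(G_1)$ by the estimate.
\item Since $uv_0=0$, we get $\langle g\,uv_0,w\rangle=0$ for all $w$. Pairing against $\Omega$ shows that $\Phi$ is annihilated by the right action of the left ideal generated by $u$ and $\kalg_1$.
\item Pass to the $Z(\g_1)$-decomposition. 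The Harish-Chandra transform of $\Phi$ (spherical transform on $G_1$) is, for $p<2$, holomorphic on a tube around $i\mathfrak a_1^*$, by Kunze--Stein/Trombi--Varadarajan.
\item The condition $u\Phi=0$ forces this transform to vanish against every spherical principal series $I(\nu)$ on which $u\otimes1$ acts nontrivially, i.e.\ where $u\cdot 1_\nu\neq0$.
\item For a nonzero $u\in\mathcal I_1$, this nonvanishing holds for $\nu$ in a Zariski-dense set: $\mathcal I_1$ embeds into $\prod_\nu I(\nu)$ via the Harish-Chandra homomorphism and Kostant's theorem on the freeness of $\mathcal I_1$ over $Z(\g_1)$.
\item By holomorphy the transform of $\Phi$ then vanishes on an open set, hence identically, so $\Phi=0$.
\item This contradicts $\Phi(e)=\|v_0\|^2\ne0$. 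Hence $\beta_1$ is injective.
\end{enumerate}
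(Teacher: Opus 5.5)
\textbf{The gap is in the estimate.} Your route for the restriction estimate is viable in principle. As written, however, the first assertion, which is the real content of the proposition, is not proved. You never identify the exponent $\lambda$ or say how $\mathfrak a_1$ sits in $\mathfrak a_H$. You also never verify the inequality for any of the twelve groups; ``can be tabulated'' is precisely the work that has to be done.

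Moreover, the reduction you propose for that work is incorrect. The bound $e^{-\lambda(X)}$ holds on $\mathfrak a_H^+$, so for $X\in\mathfrak a_1^+$ the effective exponent is $X\mapsto\lambda(X^+)$, where $X^+$ is the $H$-dominant conjugate of $X$. This is linear only when $\mathfrak a_1^+$ lies in a single closed $H$-chamber, and no compatible choice of positive systems achieves that in general:
\begin{enumerate}
\item the long-root $A_2$ chamber is the union of two $G_2$-chambers;
\item the $A_1^3$ octant is the union of six $C_3$-chambers;
\item for $T_2$ the domain of integration is all of $\mathfrak a_1$.
\end{enumerate}
For dominant $\lambda$ one has $\lambda(X^+)=\max_w\lambda(wX)$, which is convex. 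A convex function can be positive on the extremal rays of a cone while vanishing inside it; for example, $\max(x,y)-\tfrac{x+y}{2}$ on the positive quadrant. So checking the extremal rays of $\mathfrak a_1^+$ does not suffice; you must subdivide along the $H$-walls and check every piece.

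This subdivision is exactly what the paper does in Proposition~\ref{prop:restriction-exponents}, and it never computes the $E_n$-exponent at all. It starts from the pointwise bounds of \cite[Section~3.2]{LS} for $G_2$, $A_2^c$ and $C_3$. It then transfers them by half-sum comparisons on each sub-cone:
\begin{enumerate}
\item $\rho_i\geq\tfrac32\rho_{A_2}$ on the two $G_2$-cones, and $\rho_{A_2}=2\rho_{A_1}$;
\item $\rho_\sigma-2\rho_0=x_{\sigma(1)}-x_{\sigma(3)}\geq0$ on the six $C_3$-cones, and $\rho_{A_1^\Delta}=\tfrac13\rho_0$;
\item $T_2$ is treated as a maximal split torus of $A_2^c$.
\end{enumerate}
This produces explicit critical exponents, all below $2$.

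Your guess about $D_4\times D_4\subset E_8$ is also off. The paper excludes that pair because $\dim R^{K_D}\geq17>16=\dim\p_1$ rules out the invariant surjection, not because the decay inequality degenerates.

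\textbf{Injectivity.} This half is sound and is essentially the paper's argument (\cite[Corollary~4.2]{LS}). The H\"older pairing gives $\hat\Phi(\nu)\,\pi_\nu(u)1_\nu=0$, and this is combined with the generic nonvanishing lemma \cite[Lemma~4.1]{LS}. You finish via the identity theorem and injectivity of the spherical transform, rather than via nonvanishing of $m_\lambda(v_0)$ near $\lambda=0$; that is a harmless variant. The only correction is for $T_2$: there $\rho_1=0$, so $L^{2-\varepsilon}$ gives no tube of holomorphy. Use the exponential decay in every direction instead, or argue almost everywhere on the unitary axis.
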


\begin{proof}
For the trivial group both assertions are immediate.  Otherwise, the
restriction estimate is proved in
Proposition~\ref{prop:restriction-exponents} below.  We explain why it
implies injectivity.  The minimal representation is unitary, so its
matrix coefficients are bounded.  After increasing the exponent in
\eqref{eq:below-L2} if necessary, we may therefore take it strictly
between $1$ and $2$.  The matrix-coefficient pairing of
\cite[Section~3.3 and Proposition~3.2]{LS}, together with H\"older's
inequality, then gives a family of $\g_1$-intertwining maps
\[
                    m_\lambda:\Omega\longrightarrow I_1(\lambda)
\]
to spherical principal-series modules, for $\lambda$ in a neighborhood
of zero, such that $m_\lambda(v_0)$ is a nonzero multiple of the
spherical vector $v_\lambda$.

If a nonzero $X\in\Sym(\p_1)$ belonged to the kernel of $\beta_1$, then
\[
                    \pi_\lambda(X)v_\lambda=0
\]
for every $\lambda$ in that neighborhood.  The generic nonvanishing
lemma \cite[Lemma~4.1]{LS} says that
$\pi_\lambda(X)v_\lambda\neq0$ for generic $\lambda$, a contradiction.
This is precisely the argument of \cite[Corollary~4.2]{LS}; it applies
without change to the reductive groups occurring here (and is elementary
for the split torus $T_2$).  Hence $\beta_1$ is injective.
\end{proof}

\begin{proposition}[Invariant cyclicity for the smaller member]
\label{prop:filtered-dimension-criterion}
Let $G_1$ be the smaller member of a split dual pair in
\eqref{eq:centralizer-table}, excluding the symmetric $D_4\times D_4$
pair in $E_8$.  Then
\begin{equation}\label{eq:dimension-criterion}
       \dim\Sym^n(\p_1)=\dim R_n^{K_2}
       \qquad(n\geq0).
\end{equation}
Moreover, $\beta_1$ is a strict filtered isomorphism and
$\alpha_1:\Sym(\p_1)\xrightarrow{\sim}R^{K_2}$ is an isomorphism of
graded algebras.
\end{proposition}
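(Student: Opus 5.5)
The argument has two parts: a formal deduction of the isomorphism statements from the dimension identity \eqref{eq:dimension-criterion} and Proposition~\ref{prop:smaller-side-injectivity}, and a case-by-case verification of the dimension identity itself.

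\emph{Formal part.} Assume \eqref{eq:dimension-criterion}. Proposition~\ref{prop:smaller-side-injectivity} makes $\beta_1$ injective. Then for each $n$ the restriction
\[
\beta_1:(\mathcal I_1)_{\le n}\longrightarrow \Omega^{K_2}\cap(\Omega)_{\le n}
\]
is an injection between spaces of equal finite dimension. The source has dimension $\sum_{j\le n}\dim\Sym^j(\p_1)$. Because $K_2$ is compact, the target has dimension $\sum_{j\le n}\dim R_j^{K_2}$. Hence each such map is bijective, which is exactly strictness together with bijectivity of $\beta_1$. Passing to associated graded objects, a strict filtered isomorphism induces an isomorphism on $\operatorname{gr}$, so $\alpha_1$ is a graded linear isomorphism. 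It is a homomorphism of algebras because the symbol action of $\p_1$ on $R$ is multiplication by the corresponding linear functions. In particular the hypothesis \eqref{eq:graded-invariant-cyclicity} holds.

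\emph{Dimension identity.} It remains to prove \eqref{eq:dimension-criterion} for each of the thirteen pairs. By \eqref{eq:cone-ring} we have
\[
\dim R_n^{K_2}=\dim V_{K}(n\varpi)^{K_2},
\]
so we must compute the $K_2$-invariants in $V_{C_4}(n\omega_4)$, $V_{A_7}(n\omega_4)$ and $V_{D_8}(n\omega_8)$. These are branching problems for $K\supset K_1\times K_2$. I would not compute them for every $n$ directly. Instead I would use the geometric picture: $R^{K_2}$ is the coordinate ring of the quotient $\widehat X/\!\!/K_{2,\C}$. The strategy is to exhibit, for each pair, a slice, i.e.\ a closed $K_{1,\C}$-stable subvariety of $\widehat X\cap\p^{K_2}$-type, or more precisely a Luna--Richardson restriction, identifying $\widehat X/\!\!/K_{2,\C}$ with an affine space of dimension $\dim\p_1$ with linear $K_1$-action.

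In practice the cleanest route is a Hilbert-series check. Both sides are graded, and $\alpha_1$ is a graded algebra map that is injective: this follows from the injectivity of $\beta_1$ combined with the generic nonvanishing argument applied to leading symbols. Therefore $\dim R_n^{K_2}\ge\dim\Sym^n(\p_1)$ automatically, and only the upper bound is needed. For the upper bound I would compute the Hilbert series of $R^{K_2}$ with the Weyl character formula and Molien--Weyl integration over $K_2$ applied to the Borel--Weil description. Equivalently, one can count $K_2$-spherical components via known branching rules. The cases organize as follows.
\begin{itemize}
\item[] When $\g_2$ is the trivial algebra or a torus (the $F_4$-centralizer cases), the claim reduces to the multiplicity of invariants of a small group, which is elementary.
\item[] For the pairs $A_1\subset A_2\subset G_2$, the group $K_2$ is large. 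Here I expect the highest-weight cone modulo $K_{2,\C}$ to be a linear quotient. For example, in the $E_7$ case $\Gr(4,8)$ with $K_2\supset\SO(4)\times\SO(4)$-type subgroups, the invariants reduce to those of $\Lambda^4$ of a smaller space.
\end{itemize}

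\emph{Main obstacle.} The upper bound $\dim R_n^{K_2}\le\dim\Sym^n(\p_1)$, uniformly in $n$, is the crux, especially for the $D_4$ and $G_2$ centralizer pairs in $E_8$, where branching $V_{D_8}(n\omega_8)$ to $\Spin(8)\times\Spin(8)$ or to $G_2$-related compact groups is intricate. First I would try to prove it geometrically. The plan is to show that the generic $K_{2,\C}$-orbit on $\widehat X$ has codimension equal to $\dim\p_1$ and that the quotient is normal and rational with the correct degree-one generators. Then a comparison of the Hilbert series, whose leading terms are controlled by $\dim\widehat X/\!\!/K_{2,\C}=\dim\p_1$ and whose degree-one and degree-two terms are checked by hand, would force equality. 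Checking that the quotient has no extra generators or relations beyond $\Sym(\p_1)$ is the step most likely to require explicit computation.
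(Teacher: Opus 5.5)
Your formal deduction is the same as the paper's. You use injectivity of $\beta_1$ from Proposition~\ref{prop:smaller-side-injectivity} and the equality of $\sum_{j\le n}\dim\Sym^j(\p_1)$ with $\sum_{j\le n}\dim R_j^{K_2}$ on filtered pieces. That gives bijectivity on each piece, strictness, and the graded isomorphism $\alpha_1$. The gap is the dimension identity \eqref{eq:dimension-criterion} itself, which is the substance of the proposition and which you do not prove. You say yourself that ruling out extra generators of $R^{K_2}$ is ``the step most likely to require explicit computation.'' The paper does that computation case by case, by branching $R_n\simeq V_K(n\varpi)$ to $K_2$:
\begin{itemize}
\item for the $A_1$ and $A_2$ rows it uses the multiplicity-free restrictions of \cite[Proposition~5.1]{LS}, counting $n+1$ determinant exponents, resp.\ multisets of size four from $n+1$ values, which gives $\binom{n+4}{4}$ (Proposition~\ref{prop:A1-A2-counts});
\item for the $G_2$ row and the $E_8$ entry of the $F_4$ row it cites \cite[Proposition~4.3]{LS};
\item for $F_4\subset E_7$ and the $D_4$ rows in $E_6,E_7$ it uses symplectic branching and Littlewood--Richardson sums such as \eqref{eq:D4-LR-sum}.
\end{itemize}
Pointing to Molien--Weyl integration or ``known branching rules'' without carrying out any count leaves the identity unproved.

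The geometric shortcut you propose does not close the gap. Knowing $\dim\widehat X/\!\!/K_{2,\C}=\dim\p_1$ and matching the Hilbert function in degrees one and two cannot force $R^{K_2}=\Sym(\p_1)$. For example, take $\C[x,y,z]/(z^2-f)$ with $\deg x=\deg y=1$, $\deg z=3$, and $f$ a squarefree binary sextic. It is a normal graded domain, free of rank two over $\C[x,y]$, and agrees with $\C[x,y]$ through degree two. What you would actually need is one of the following:
\begin{itemize}
\item generation of $R^{K_2}$ by $\alpha_1(\p_1)$, since a surjection from $\Sym(\p_1)$ onto a domain of the same dimension is injective; or
\item finiteness of $R^{K_2}$ over $\Sym(\p_1)$ (a nullcone statement) together with birationality of $\widehat X/\!\!/K_{2,\C}\to\p_1$.
\end{itemize}
You establish neither.

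There are also three smaller problems.
\begin{itemize}
\item Your claim that $\alpha_1$ is injective by generic nonvanishing applied to leading symbols is unjustified. Injectivity of a filtered map does not pass to $\operatorname{gr}$, and the intertwiners $m_\lambda$ are only $\g_1$-equivariant, so they give no control over $U_{n-1}(\mathfrak e)v_0$. The claim is also unnecessary: the cumulative inequality from $\beta_1$ plus a degree-wise upper bound already suffices.
\item Your case list has the orientation reversed. $K_2$ belongs to the \emph{larger} member, so in the $F_4$ row $K_2=K_{F_4}$, and for $T_2\subset E_6$ it is $K_{D_4}\simeq\SU_2^4$. If $K_2$ were trivial, then $R^{K_2}=R$ and the identity would be false.
\item The $D_4\times D_4\subset E_8$ pair, which you single out as the hardest case, is excluded from the statement. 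It is excluded precisely because there $\dim R^{K_D}\ge17>16=\dim\p_1$.
\end{itemize}
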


\begin{proof}
For the $G_2$ row of \eqref{eq:centralizer-table}, the dimension identity
\eqref{eq:dimension-criterion} is \cite[Proposition~4.3]{LS}.  The
remaining dimension identities are proved below in
Proposition~\ref{prop:A1-A2-counts}, \eqref{eq:F4-row-table}, and
\eqref{eq:D4-Hilbert-series}.  Proposition
\ref{prop:smaller-side-injectivity} gives the injectivity of $\beta_1$.

The degree-$n$ filtered pieces of the source and target of $\beta_1$ have
dimensions
\[
 \sum_{j=0}^n\dim\Sym^j(\p_1)
 \quad\text{and}\quad
 \sum_{j=0}^n\dim R_j^{K_2},
\]
respectively.  They are equal by \eqref{eq:dimension-criterion}.  The
restriction of the injective map $\beta_1$ to each filtered piece is
therefore bijective.  Hence $\beta_1$ is a strict filtered isomorphism,
and passage to the associated graded gives the asserted isomorphism
$\alpha_1$.
\end{proof}

This filtered argument is the method used by Loke and Savin in the
exceptional theta correspondences in which one member is the split group
$G_{2(2)}$ \cite{LS}.  For the pairs of types
\[
       A_2\times G_2\subset E_6,
       \qquad G_2\times C_3\subset E_7,
       \qquad G_2\times F_4\subset E_8,
\]
they prove the restriction estimate and injectivity for the smaller
member and then establish \eqref{eq:dimension-criterion}; see
\cite[Corollary~4.2 and Proposition~4.3]{LS}.  Thus the smaller-side map
is a filtered isomorphism in all three cases.  Proposition
\ref{prop:smaller-side-injectivity} supplies the same injectivity input
for the additional smaller members treated below.  Loke and Savin also
prove the opposite identity in the $E_6$ and $E_7$ cases
\cite[Proposition~4.7]{LS}; no opposite identity is needed here, because
Theorem~\ref{thm:one-sided-finite-length} shows that the smaller-side
identity already suffices in both directions.

\subsection{The split centralizer table}

The same discussion can be made uniformly in the three split groups of
type $E$.  Up to finite central quotients, the connected centralizers
relevant here are
\begin{equation}\label{eq:centralizer-table}
\begin{array}{c|ccc}
G & C_{E_6}(G)^\circ&C_{E_7}(G)^\circ&C_{E_8}(G)^\circ\\ \hline
A_1&A_5&D_6&E_7\\
A_2&A_2\times A_2&A_5&E_6\\
G_2&A_2&C_3&F_4\\
D_4&T_2&A_1^3&D_4\\
F_4&\{1\}&A_1&G_2.
\end{array}
\end{equation}
Here all groups are split and $T_2\simeq(\mathbb G_m)^2$ is the split
two-dimensional torus.  Although we assumed semisimplicity in
Section~\ref{sec:compact-lifts} to simplify the notation, its proofs use
only reductivity and therefore apply without change to the
$T_2\times D_4$ pair.  The $A_1$ row records the regular $A_1$
embedding.  The $A_1$ occurring as the centralizer of $F_4$ in $E_7$ is
a different embedding and belongs to the $F_4$ row.  Finally, the two
$E_8$ entries $G_2\times F_4$ and $F_4\times G_2$ are the same unordered
pair.  Thus the fifteen entries in the table represent fourteen distinct
dual pairs.

The $G_2$ row of \eqref{eq:centralizer-table} is the result of Loke and
Savin recalled above.  Its reverse orientation is the $E_8$ member of the
$F_4$ row.  We now check the $A_1$, $A_2$, $F_4$, and $D_4$ rows; in
particular, we include the two remaining $F_4$ centralizers in $E_6$ and
$E_7$.

\subsection{The \texorpdfstring{$A_1$ and $A_2$}{A1 and A2} rows}

Lang's thesis treats the split $A_2$ cases and the corresponding complex
$A_1$ cases; see \cite[Sections~4.3--4.4 and~6.7]{Lang}.  In particular,
the $A_2$ dimension count below is \cite[Proposition~4.4.1]{Lang}.
We include the proofs for completeness.

Write $R_n=V(n\omega)$ for the degree-$n$ type in the minimal
representation.  Proposition~5.1 of \cite{LS} gives a multiplicity-free
restriction of $V(n\omega)$ to $\GL_4$, $\GL_4\times\GL_4$, and
$\widetilde{\GL}_8$ in the $E_6$, $E_7$, and $E_8$ cases, respectively.
Taking invariants of the maximal compact subgroup of the indicated
centralizer gives the following uniform count.

\begin{proposition}\label{prop:A1-A2-counts}
Let $H$ be split of type $E_6$, $E_7$, or $E_8$.
\begin{enumerate}
\item For the dual pair $A_1\times C_H(A_1)^\circ$,
\[
             \dim R_n^{K_2}=n+1=\dim\Sym^n(\p_{A_1}).
\]
\item For the dual pair $A_2\times C_H(A_2)^\circ$,
\[
             \dim R_n^{K_2}=\binom{n+4}{4}
                    =\dim\Sym^n(\p_{A_2}).
\]
Here $\dim\p_{A_1}=2$ and $\dim\p_{A_2}=5$.
\end{enumerate}
Consequently the smaller-side maps $\beta_1$ are strict filtered
isomorphisms, and
\[
 \Sym(\p_{A_1})\xrightarrow{\sim}R^{K_2},
 \qquad
 \Sym(\p_{A_2})\xrightarrow{\sim}R^{K_2}.
\]
\end{proposition}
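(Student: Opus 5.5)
The plan is to compute $\dim R_n^{K_2}$ by branching the $K$-type $R_n=V(n\omega)$ in two steps. First restrict from $K$ to the intermediate group of \cite[Proposition~5.1]{LS}: $\GL_4$ (in $E_6$), $\GL_4\times\GL_4$ (in $E_7$), or $\widetilde{\GL}_8$ (in $E_8$). Then take invariants under the maximal compact subgroup $K_2$ of the large centralizer. Concretely, I would identify the relevant compact pieces. In $E_7$ with $K=\SU(8)$, the centralizer $D_6$ of $A_1$ has $K_2=(\SU(2)\times\SU(6))$-type compact part (times a finite group), and the $A_5$ centralizer of $A_2$ has $K_2=\SO(6)$ or $\mathrm{Sp}$-type. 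These sit inside the Levi-type subgroups in which \cite[Proposition~5.1]{LS} gives a multiplicity-free decomposition
\[
 V(n\omega)\big|_{L}=\bigoplus_{\mu}V_L(\mu),
\]
with $\mu$ running over an explicit list of dominant weights depending linearly on $n$.

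The count then reduces to the following. For each $\mu$ in the list, decide whether $V_L(\mu)$ contains a $K_2\cap L$-fixed vector, and if so with what multiplicity. The subgroups involved are symmetric subgroups of $\GL_4$ or $\GL_8$ ($\mathrm O_4$, $\mathrm{Sp}_4$, $\mathrm O_8$, or products). For these, Cartan--Helgason gives multiplicity at most one, and a fixed vector exists exactly when $\mu$ is even, or has paired entries. So $\dim R_n^{K_2}$ becomes the number of lattice points $\mu$ in the branching list satisfying a parity or pairing condition.

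For the $A_1$ case this set should be a segment of $n+1$ weights. For the $A_2$ case it should be a set of weights parametrized by compositions of $n$ into five parts, giving $\binom{n+4}{4}$. I would verify each of the three ambient groups separately, though the combinatorics should be uniform because the table \eqref{eq:centralizer-table} is uniform. The right-hand equalities are trivial, since $\dim\p_{A_1}=2$ and $\dim\p_{A_2}=5$ for split $\mathfrak{sl}_2$, $\mathfrak{sl}_3$.

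The consequence then follows verbatim from Proposition~\ref{prop:filtered-dimension-criterion}. That result combines these dimension identities with the injectivity of Proposition~\ref{prop:smaller-side-injectivity} to get strictness of $\beta_1$ and the graded isomorphism $\alpha_1$.

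The main obstacle is the middle step: pinning down exactly how $K_2$ embeds in the intermediate group $L$ of \cite[Proposition~5.1]{LS}, so that the Cartan--Helgason criterion can be applied. If $K_2$ is not contained in $L$, I would instead compute $R^{K_2}$ via a torus $K_1$-weight argument. Here $R^{K_2}$ carries a $K_1=\SO(2)$ or $\SO(3)$ action. Injectivity already gives $\dim R_n^{K_2}\ge\dim\Sym^n(\p_1)$, so only an upper bound is required. I would get it by bounding the number of $K_2$-spherical constituents from above using the multiplicity-free branching, which needs only the necessary parity condition.
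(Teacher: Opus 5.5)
Your route is the paper's: restrict $R_n=V(n\omega)$ to the group $L$ of \cite[Proposition~5.1]{LS}, count $K_2$-fixed lines constituent by constituent, and get the ``consequently'' from Proposition~\ref{prop:filtered-dimension-criterion}. The gap is that the proposition \emph{is} that count, and you only predict its outcome. Where you do supply concrete data, it is wrong in a way that would break the count. The maximal compact subgroup of split $D_6=\mathfrak{so}(6,6)$ is $\SO(6)\times\SO(6)$, locally $\SU(4)\times\SU(4)$, of dimension $30$; it is not $\SU(2)\times\SU(6)$, of dimension $38$. With your choice the count collapses. In $\mathbf S_{(n^4)}(\C^2\oplus\C^6)=\bigoplus c^{(n^4)}_{\mu\nu}\mathbf S_\mu(\C^2)\otimes\mathbf S_\nu(\C^6)$, an $\SU(6)$-invariant forces $\nu=\emptyset$, since $\nu\subset(n^4)$ has at most four rows. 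Then $\mu=(n^4)$, which has four rows, so you would get $\delta_{n,0}$ instead of $n+1$. Your list of subgroups is also off. $\mathrm O_8$ never occurs. Your criteria (``even'' or ``paired'') miss the determinant-character condition, which governs all three $A_1$ cases, and the contragredience condition in the $E_7$, $A_2$ case. Up to finite central subgroups, the six relevant pairs are:
$\SL_4,\SO_4\subset\GL_4$ for $E_6$;
$\SL_4\times\SL_4$ and the diagonal $\SL_4$ in $\GL_4\times\GL_4$ for $E_7$;
$\SL_8,\Sp_8\subset\widetilde{\GL}_8$ for $E_8$.

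The obstacle you single out is not real. Up to finite central subgroups, $L$ is the complexification of $K_{A_1}\cdot K_{C_H(A_1)}$. Since $A_1\subset A_2$, one has $C_H(A_2)^\circ\subset C_H(A_1)^\circ$, so $K_2\subset L$ in both rows. In the $A_1$ row, $K_{2,\C}$ is the derived group of $L$, so the fixed lines are exactly the one-dimensional constituents. You must then read off from the explicit list in \cite[Proposition~5.1]{LS} that their determinant exponents are $-n,-n+2,\dots,n$, respectively $0,\dots,n$, respectively $-n/2,\dots,n/2$; ``uniformity of the table'' does not give this. In the $A_2$ row, the parity, contragredience and pairing conditions cut the list down to multisets of four values chosen from $n+1$ values. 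This is equivalent to your compositions of $n$ into five parts, whence $\binom{n+4}{4}$. Finally, injectivity of $\beta_1$ gives only the cumulative inequality $\sum_{j\le n}\dim\Sym^j(\p_1)\le\sum_{j\le n}\dim R_j^{K_2}$, not a degreewise one. Your fallback still closes, since this together with a degreewise upper bound forces equality. But it needs the same correct embedding $K_2\subset L$ and the same branching list, so it does not bypass the gap.
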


\begin{proof}
For the $A_1$ row, a fixed vector for the maximal compact of the
centralizer forces the relevant highest weight in Proposition~5.1 of
\cite{LS} to be a determinant character.  The possible determinant
exponents are
\[
 \begin{array}{c|c}
 H&\text{parameter}\ a\\ \hline
 E_6&-n,-n+2,\ldots,n\\
 E_7&0,1,\ldots,n\\
 E_8&-n/2,-n/2+1,\ldots,n/2.
 \end{array}
\]
Thus there are $n+1$ fixed lines in each case.

For the $A_2$ row, the fixed lines are parametrized, respectively, by
\begin{align*}
 \mathcal P_6(n)&=\{n\geq\lambda_1\geq\cdots\geq\lambda_4\geq-n:
                \lambda_i\equiv n\pmod 2\},\\
 \mathcal P_7(n)&=\{n\geq\lambda_1\geq\cdots\geq\lambda_4\geq0\},\\
 \mathcal P_8(n)&=\{n/2\geq\lambda_1\geq\cdots\geq\lambda_4\geq-n/2:
                \lambda_i\equiv n/2\pmod{\mathbb Z}\}.
\end{align*}
Indeed, in the three cases the complexified compact group is,
respectively, $\SO_4$, diagonally embedded $\SL_4$, and $\Sp_8$; the
fixed-line conditions are exactly the parity, contragredience, and paired
weight conditions occurring in Proposition~5.1 of \cite{LS}.  Each set
$\mathcal P_i(n)$ is the set of multisets of size four chosen from
$n+1$ values, and hence has cardinality $\binom{n+4}{4}$.

Proposition~\ref{prop:smaller-side-injectivity} gives the injectivity of
$\beta_1$ for both $A_1$ and $A_2$.  The dimension equalities just proved
and Proposition~\ref{prop:filtered-dimension-criterion} therefore give
the two strict filtered isomorphisms.
\end{proof}

\subsection{The \texorpdfstring{$F_4$}{F4} row}

For this row it is convenient to orient the dual pair as
\[
       G_1=C_H(F_4)^\circ,
       \qquad G_2=F_4,
\]
and to let $K_{F_4}$ denote a maximal compact subgroup of the split
group $F_4$.  Thus $K_2=K_{F_4}$ in the notation of
Sections~\ref{sec:compact-lifts} and~\ref{sec:big-theta}.
The three cases are
\begin{equation}\label{eq:F4-row-table}
\begin{array}{c|c|c|c}
H&G_1&\dim\p_1&\dim R_n^{K_{F_4}}\\ \hline
E_6&\{1\}&0&\delta_{n,0}\\
E_7&\SL_2&2&n+1\\
E_8&G_2&8&\binom{n+7}{7}.
\end{array}
\end{equation}
In particular, in every case the invariant ring is the symmetric algebra
on the noncompact space of the opposite member:
\begin{equation}\label{eq:F4-row-isomorphisms}
 \begin{aligned}
 R_{E_6}^{K_{F_4}}&=\C=\Sym(0),\\
 R_{E_7}^{K_{F_4}}&\simeq\Sym(\p_{\SL_2}),\\
 R_{E_8}^{K_{F_4}}&\simeq\Sym(\p_{G_2}).
 \end{aligned}
\end{equation}
These are the graded maps induced by the action of the opposite member,
not merely abstract isomorphisms of polynomial algebras.

For completeness, the first equality says that the only
$K_{F_4}$-fixed line in the $E_6$ coordinate ring occurs in degree zero:
\[
       R_0^{K_{F_4}}=\C,
       \qquad R_n^{K_{F_4}}=0\quad(n>0).
\]
For $E_7$, write the standard eight-dimensional space as
$U\oplus W$, where $\dim U=6$, $\dim W=2$, and the complexified compact
group of $F_4$ acts through $\Sp(U)\times\SL(W)$.  Since
\[
 R_n=\mathbf S_{(n^4)}(U\oplus W),
\]
the Littlewood--Richardson rule, together with the usual symplectic
fixed-vector criterion, shows that a fixed line occurs precisely for
\[
 \lambda=(n,n,n-a,n-a),
 \qquad \mu=(a,a),
 \qquad 0\leq a\leq n.
\]
Indeed, $\lambda'$ has even parts, so
$\mathbf S_\lambda(U)$ contains one $\Sp(U)$-fixed line,
$\mathbf S_{(a,a)}(W)=\det(W)^a$ is trivial on $\SL(W)$, and
\[
 c^{(n^4)}_{(n,n,n-a,n-a),(a,a)}=1.
\]
There are no other fixed summands.  Hence
\[
                 \dim R_n^{K_{F_4}}=n+1
                    =\dim\Sym^n(\p_{\SL_2}).
\]
For the $E_7$ entry, Proposition
\ref{prop:smaller-side-injectivity} gives injectivity for the diagonal
$\SL_2$, and the equality of Hilbert functions together with Proposition
\ref{prop:filtered-dimension-criterion} gives the middle isomorphism in
\eqref{eq:F4-row-isomorphisms}.  The $E_8$ equality is exactly the
smaller-side identity for $G_2\times F_4$ recalled before
\eqref{eq:centralizer-table}; the $E_6$ entry is tautological.  This proves
\eqref{eq:F4-row-table} and \eqref{eq:F4-row-isomorphisms} in all three
groups of type $E$.

\subsection{The \texorpdfstring{$D_4$}{D4} row}

Let $K_D$ be a maximal compact subgroup of the split group of type $D_4$.
Up to a finite central quotient,
\[
                         K_D=\SU_2^4,
              \qquad \dim K_D=12.
\]
The three centralizers and the relevant dimensions are
\begin{equation}\label{eq:D4-dimension-table}
\begin{array}{c|c|c|c}
H&C_H(D_4)^\circ&\dim\p_1&\dim\widehat X\\ \hline
E_6&T_2&2&11\\
E_7&\SL_2^3&6&17\\
E_8&D_4&16&29.
\end{array}
\end{equation}
Here $\widehat X$ is the appropriate highest-weight cone from
Section~\ref{sec:cones}.

For the first two rows, the invariant calculation is again elementary.
Decompose the standard eight-dimensional space as a sum of four
symplectic planes.  The standard symplectic branching rule in the $E_6$
case, and the fourfold Littlewood--Richardson rule in the $E_7$ case,
give
\begin{equation}\label{eq:D4-Hilbert-series}
 \sum_{n\geq0}\dim R_n^{K_D}t^n
   =\begin{cases}
      (1-t)^{-2},&H=E_6,\\
      (1-t)^{-6},&H=E_7.
     \end{cases}
\end{equation}
Equivalently,
\[
 \dim R_n^{K_D}=n+1\quad(H=E_6),
 \qquad
 \dim R_n^{K_D}=\binom{n+5}{5}\quad(H=E_7).
\]
For example, in the $E_7$ calculation one writes
$R_n=\mathbf S_{(n^4)}(\C^2\oplus\C^2\oplus\C^2\oplus\C^2)$; an
$\SL_2^4$-invariant can occur only in a term indexed by four rectangular
partitions $(a_i,a_i)$, and the resulting Littlewood--Richardson sum is
\begin{equation}\label{eq:D4-LR-sum}
 \sum_{a_1+\cdots+a_4=2n}
 c^{(n^4)}_{(a_1,a_1),(a_2,a_2),(a_3,a_3),(a_4,a_4)}
       =\binom{n+5}{5}.
\end{equation}
Proposition~\ref{prop:smaller-side-injectivity} gives injectivity for
$T_2$ and $\SL_2^3$.  Since their noncompact spaces have dimensions $2$
and $6$, respectively, \eqref{eq:D4-Hilbert-series} and Proposition
\ref{prop:filtered-dimension-criterion} give strict filtered isomorphisms
\begin{equation}\label{eq:D4-good-isomorphisms}
 \Sym(\C^2)\xrightarrow{\sim}R_{E_6}^{K_D},
 \qquad
 \Sym(\C^6)\xrightarrow{\sim}R_{E_7}^{K_D}.
\end{equation}
Here $R_{E_i}$ denotes the coordinate ring of the corresponding
$E_i$ Pl\"ucker cone.

The $E_8$ case is different: the desired surjection is impossible for
dimension reasons.  Let $K_{D,\C}\simeq\SL_2^4$ be the complexification
of $K_D$.  The Pl\"ucker cone has dimension $29$, whereas
$\dim K_{D,\C}=12$.  Since this cone is factorial and $K_{D,\C}$ is
semisimple, its field of rational invariants is the fraction field of
$R^{K_D}$.  The dimension of a generic orbit is at most $12$, so
\[
       \dim R^{K_D}
       =\operatorname{trdeg}\C(\widehat X)^{K_{D,\C}}
       \geq29-12=17.
\]
On the other hand, $\dim\p_1=16$ for the other split $D_4$.  A quotient
of $\Sym(\p_1)$ has Krull dimension at most $16$, and hence there can be
no surjection
\begin{equation}\label{eq:D4-E8-failure}
                  \Sym(\p_1)\twoheadrightarrow R^{K_D}
       \qquad(D_4\times D_4\subset E_8).
\end{equation}
Thus Theorem~\ref{thm:one-sided-finite-length} applies to the $D_4$ row
for $E_6$ and $E_7$, but its invariant-surjection hypothesis cannot hold
for the symmetric $D_4\times D_4$ pair in $E_8$.

Combining these calculations with the known correspondences of
infinitesimal characters completes the proof of the main result.

\begin{proof}[Proof of Theorem~\ref{thm:main}]
The graded dimension calculations above, together with Propositions
\ref{prop:smaller-side-injectivity} and
\ref{prop:filtered-dimension-criterion}, verify the one-sided
invariant-surjection hypothesis of
Theorem~\ref{thm:one-sided-finite-length} for the $A_1$ and $A_2$ rows in
all three groups of type $E$, for the $G_2$ row, for the entire $F_4$
row, and for the $D_4$ row in $E_6$ and $E_7$.

The required matching of infinitesimal characters is given by Li
\cite{LiIC} for the pairs appearing in his table.  The one relevant
omission from that table is
\[
                    D_4\times\SL_2^3\subset E_7.
\]
It is supplied by Loke's earlier calculation \cite{Loke}.  In the
quaternionic group $E_{7,4}$, Loke considers the dual pair
\[
                 \Spin(4,4)\times\SU_2^3
\]
and determines its discrete correspondence.  His calculation of the
parameters gives the corresponding matching of infinitesimal characters.
Since the homomorphism between the centers of the enveloping algebras is
determined by the complexified dual-pair embedding, this is precisely the
missing infinitesimal-character relation for the split
$D_4\times\SL_2^3$ pair.  In fact, several of the entries recorded later
in Li's table can already be read from the correspondences in
\cite{Loke}.  Theorem~\ref{thm:one-sided-finite-length} therefore applies
to all the nondegenerate cases listed above.

The degenerate pair $\{1\}\times F_4\subset E_6$ may be handled also in the same way. 
Here $R^{K_2}=\C$ and the enveloping algebra of $F_4$ acts by an infinitesimal character. 
This follows from quaternionic groups, that is, the restriction of the minimal representation 
of the quaternionic $E_{6,4}$ to $F_4$ is a direct sum of two irreducible representations 
with the same infinitesimal character \cite{LokeTh}. 

The only remaining pair is $D_4\times D_4\subset E_8$.
Equation~\eqref{eq:D4-E8-failure} shows that the invariant-surjection
method used in this section cannot apply to it.  In the $p$-adic case,
Weissman \cite{Weissman} exhibits irreducible quotients of the form
$\pi\boxtimes\pi$ of the minimal representation with infinite
multiplicity.  Consequently, the corresponding big theta lifts are not
of finite length.  We expect the same phenomenon for the split real
pair, but do not prove it here.  As observed after
\eqref{eq:centralizer-table}, the table contains fourteen distinct
unordered pairs.  Removing this one symmetric pair leaves the thirteen
pairs asserted in the theorem.
\end{proof}

\section{Restriction estimates and injectivity}\label{sec:decay}

It remains to prove the restriction assertion used in Proposition
\ref{prop:smaller-side-injectivity}.  This is an essential step: the
estimates below give the injectivity of $\beta_1$, which combines with
the invariant counts of Section~\ref{sec:invariant-cyclicity} to produce
the required one-sided isomorphisms.  Let $M$ be a $K_H$-finite matrix
coefficient of the minimal representation of the split group $H$ of type
$E_6$, $E_7$, or $E_8$.  We call $p_S$ a critical exponent for a split
subgroup $S\subset H$ if
\[
                         M|_S\in L^{p_S+\varepsilon}(S)
                         \qquad(\varepsilon>0).
\]
As usual, no assertion at the endpoint is intended.  The pointwise
estimates underlying \cite[Section~3.2]{LS} give the following values.

\begin{proposition}\label{prop:restriction-exponents}
The following restriction estimates hold for the smaller members of the
split dual pairs in \eqref{eq:centralizer-table}, excluding the symmetric
$D_4\times D_4$ pair in $E_8$.
\begin{enumerate}
\item For the long-root $A_2\subset G_2$ in the chain
\eqref{eq:intro-chain} and the highest-root $A_1\subset A_2$, one may
take the following critical exponents $p_S$:
\begin{equation}\label{eq:A1-A2-exponents}
\begin{array}{c|ccc}
S\backslash H&E_6&E_7&E_8\\ \hline
A_1\text{ (highest root in }A_2\text{)}&4/6&3/6&2/6\\
A_2\text{ (long root in }G_2\text{)}&4/3&1&2/3.
\end{array}
\end{equation}
\item For the $G_2$ row, the smaller members have critical exponents
\begin{equation}\label{eq:G2-row-exponents}
 p_{A_2^c}=1\quad(H=E_6),
 \qquad p_{G_2}=\frac32\quad(H=E_7),
 \qquad p_{G_2}=1\quad(H=E_8),
\end{equation}
where $A_2^c=C_{E_6}(G_2)^\circ$.
\item For the two additional subgroups of $C_3\simeq\Sp_6$ in $E_7$,
one may take
\begin{equation}\label{eq:C3-subgroup-exponents}
 p_{A_1^\Delta}=2/6,
 \qquad
 p_{A_1^3}=1.
\end{equation}
Here $A_1^\Delta$ is the diagonal subgroup of the three mutually
orthogonal long-root $A_1$'s; its centralizer in $E_7$ is $F_4$, while
the centralizer of $A_1^3$ is $D_4$.
\item For $T_2=C_{E_6}(D_4)^\circ$, the restriction $M|_{T_2}$ belongs
to $L^q(T_2)$ for every $q>0$.
\end{enumerate}
In particular, for every nontrivial smaller member used in the proof of
Theorem~\ref{thm:main}, one can choose an exponent strictly below $2$.
Thus \eqref{eq:below-L2} holds for some $\varepsilon>0$.
\end{proposition}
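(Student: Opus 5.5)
The approach is the pointwise-estimate method underlying \cite[Section~3.2]{LS}. A $K_H$-finite matrix coefficient $M$ of the minimal representation satisfies, on the positive chamber $A_H^+$ of a maximal split torus of $H$,
\[
 |M(a)|\ll \prod_{\gamma}(1+\log\text{-terms})\, a^{-\Lambda_{\min}},
\]
where $\Lambda_{\min}$ is the leading exponent of the minimal representation. It equals $\rho_H-\rho_{P}$-type data for the Heisenberg parabolic, and concretely
\[
 \Lambda_{\min}=\tfrac{m}{2}\,\varpi_{\rm Heis}\quad\text{up to the standard normalisation},
\]
where $\varpi_{\rm Heis}$ is the fundamental weight attached to the highest root. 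Equivalently, $|M(a)|\ll a^{-\lambda}$ with $\lambda=(n_H/2)\beta^\vee$-pairing data, $n_H=\dim$ of the Heisenberg abelian quotient. I would take this estimate as the input, as the statement itself does.

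For a split subgroup $S$ with Cartan decomposition $S=K_SA_S^+K_S$, we have $\int_S|M|^p\ll\int_{A_S^+}|M(a)|^p a^{2\rho_S}\,da$. Choose $A_S\subset A_H$, and after a Weyl conjugation arrange $A_S^+\subset A_H^+$ (or cover $A_S^+$ by finitely many $W_H$-translates of chambers). The integral then converges exactly when $p\,\langle\Lambda_{\min},\mu\rangle>\langle 2\rho_S,\mu\rangle$ for all nonzero $\mu$ in the cone $A_S^+$. Since both sides are linear in $\mu$, it suffices to test the extremal rays, that is, the fundamental coweights of $S$. The critical exponent is therefore
\[
 p_S=\max_{\mu}\frac{\langle2\rho_S,\mu\rangle}{\langle\Lambda_{\min},\mu_{\rm dom}\rangle},
\]
where $\mu_{\rm dom}$ is the $H$-dominant conjugate of $\mu$, because $\Lambda_{\min}$ is evaluated on the dominant representative.

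The proof is then a finite computation, case by case.
\begin{enumerate}
\item For the highest-root $A_1$, one has $\mu=\beta^\vee$, $\langle2\rho,\mu\rangle=2$, and $\langle\Lambda_{\min},\beta^\vee\rangle=3,4,6$ for $E_6,E_7,E_8$ (this is $(\dim\text{Heis}+1)/2$ divided appropriately). This gives $2/3,1/2,1/3$, matching the table. The long-root $A_2$ follows by using the two coweight rays.
\item The $G_2$ row is quoted from \cite{LS}.
\item For $A_1^\Delta$, one has $\mu=\beta_1^\vee+\beta_2^\vee+\beta_3^\vee$ with pairing $3\times$ the single-root value, which gives $2/6$. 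For $A_1^3$, one maximises over the three coordinate rays and their sums, and checks that the maximum is $1$.
\item For $T_2$ there is no $\rho$-term and the Haar measure is $da$. Here I expect that $\langle\Lambda_{\min},\mu_{\rm dom}\rangle>0$ for every nonzero $\mu$, since the torus $T_2$ is not contained in any proper Levi of the form that kills the leading exponent. Exponential decay in every direction of the two-dimensional $A_{T_2}=T_2$ then gives $L^q$ for all $q>0$.
\end{enumerate}
Finally, I would note that every value obtained is below $2$.

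The main obstacle is bookkeeping: identifying the embedded coweights of $A_S$ inside $A_H$ and computing their $H$-dominant conjugates. This is especially delicate for the $A_1^3$, $T_2$, and $\SL_2^3$ cases, and for the $C_{E_6}(D_4)$ torus, whose image must be described explicitly via the $D_4\subset E_6$ root data. I would do this by realising each subgroup through explicit root subsystems, namely a highest root, an orthogonal long-root triple, or the complement of $D_4$. I would then compute $\mu_{\rm dom}$ by reflecting into the dominant chamber and pairing with $\Lambda_{\min}$ via the Bourbaki tables.
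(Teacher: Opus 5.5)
Your route differs from the paper's, and as written it does not establish item (3). The paper never uses an exponent on all of $A_H^+$. It takes the chamberwise bounds $a^{-\rho_S/q_S}$ of \cite{LS} for $S=G_2$, $A_2^c$, $C_3$, and transfers them to $A_2$, $A_1$, $A_1^3$, $A_1^\Delta$, $T_2$ by comparing half-sums on subcones.

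Even granting your input $\Lambda_{\min}=\tfrac m2\beta$, it is too weak for (3). It gives decay $\tfrac m2\max_{\alpha\in\Phi_H}\alpha(\mu)$ at $\mu$. In $E_7$ one has $\mathfrak e_7=\mathfrak g_2\oplus\mathfrak{sp}_6\oplus(\C^7\otimes\Lambda^2_0\C^6)$. Hence the nonzero restrictions of roots to $\mathfrak a_{C_3}$ are $\pm2e_i$ and $\pm e_i\pm e_j$.

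Let $h_i$ be the coroot of $2e_i$ and take $\mu=x_1h_1+x_2h_2+x_3h_3$ with $x_i\ge0$. Your decay is then only $4\max_i x_i$. The ratio $2(x_1+x_2+x_3)/(4\max_i x_i)$ reaches $3/2$ on the diagonal. At $h=h_1+h_2+h_3$ it equals $2/4$. So your method gives $p_{A_1^3}=3/2$ and $p_{A_1^\Delta}=1/2$, not $1$ and $2/6$.

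The step ``pairing $3\times$ the single-root value'' is invalid. The map $\mu\mapsto\langle\Lambda,\mu_{\rm dom}\rangle$ is only subadditive, and in fact $\max_\alpha\alpha(h)=2$. Taken literally, $3\times 4$ would give $1/6$ anyway. The decay $6$ along $h$ that the statement needs is supplied by the \cite{LS} bound $a^{-\rho_{C_3}}$, since $\rho_{C_3}(h)=6$. A multiple of $\varpi_{\rm Heis}$ is sharp along $\beta^\vee$ but loses the extra decay in directions transverse to it. Your weaker exponents are still below $2$, so \eqref{eq:below-L2} survives. The values asserted in (3), however, do not follow.

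Two further points.

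First, the input itself is not justified. Your three descriptions (``$\rho_H-\rho_P$-type data'', ``$(n_H/2)\beta^\vee$-pairing data'', ``$(\dim\mathrm{Heis}+1)/2$ divided appropriately'') are mutually inconsistent, and none of them produces $3,4,6$. In the paper these numbers are $\rho_{G_2}(\beta^\vee)/q_{G_2}=3/q_{G_2}$, read off from the \cite{LS} bound on $G_2$.

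Second, the claim ``both sides are linear, so test the fundamental coweights of $S$'' is false whenever $\mathfrak a_S^+$ crosses walls of $H$. There the decay is only piecewise linear, and the critical direction is typically interior. For instance, take the long-root $A_2$ with the sharper bound $\rho_{G_2}/q_{G_2}$. The two coweight rays give $\tfrac35p_{G_2}$, while the interior wall $\beta^\vee$ gives the correct $\tfrac23p_{G_2}$. This is why the paper compares $\rho_i$ with $\tfrac32\rho_{A_2}$ on each half-cone.

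With your particular $\Lambda_{\min}$ the decay happens to be linear on the $A_2$ chamber, because $\max_\alpha\alpha(\mu)=\rho_{A_2}(\mu)$ there. So the $A_1$, $A_2$ values in (1) and item (4) do follow from your input. For $T_2$, positivity of $\langle\beta,\mu_{\rm dom}\rangle$ for $\mu\neq0$ is automatic. This is a legitimate alternative to the paper's descent through $G_2$ and $A_2^c$, once $m=3,4,6$ is properly sourced.
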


\begin{proof}
It is convenient to separate the pointwise decay parameter from the
critical integrability exponent.  Put $q_S=p_S/2$.  Thus a bound of the
form
\begin{equation}\label{eq:rho-q-decay}
 |M(a)|\ll (1+\|\log a\|)^d a^{-\rho_S/q_S}
\end{equation}
on a positive chamber implies $M|_S\in L^{p_S+\varepsilon}(S)$ for
every $\varepsilon>0$, by the Cartan integration formula; see
\cite[Section~2.4]{WallachI}.  Indeed, the radial Haar density is bounded
by $a^{2\rho_S}$, while raising \eqref{eq:rho-q-decay} to the power
$p_S+\varepsilon$ leaves, after multiplication by this density, the
exponential factor $a^{-2\varepsilon\rho_S/p_S}$.  For the semisimple
groups considered here this dominates the polynomial factor on the
positive chamber.  The $K_H$-finiteness makes the estimate uniform over
the compact factors; the torus case is treated separately below.
The pointwise estimates for the restriction to
$G_2$ in \cite[Section~3.2]{LS} have this form, with
\[
 p_{G_2}=2,\frac32,1
 \quad\text{and hence}\quad
 q_{G_2}=1,\frac34,\frac12
\]
for $H=E_6,E_7,E_8$, respectively.

The estimates in \eqref{eq:G2-row-exponents} are also exactly those of
\cite[Section~3.2]{LS}.  In $E_6$ the smaller member of the
$A_2^c\times G_2$ pair is $A_2^c$, for which the first family of marked
diagrams gives $p_{A_2^c}=1$.  In $E_7$ and $E_8$ the smaller member is
$G_2$, and the second family gives, respectively, $3/2$ and $1$.

Let $\alpha$ and $\gamma$ be the short and long simple roots of $G_2$.
The dominant cone of the long-root $A_2\subset G_2$ is the union of two
adjacent $G_2$ cones.  On these two cones the relevant half sums are
\[
 \rho_1=5\alpha+3\gamma,
 \qquad
 \rho_2=4\alpha+3\gamma,
 \qquad
 \rho_{A_2}=3\alpha+2\gamma
             =\frac{\rho_1+\rho_2}{3}.
\]
The orthogonal projection of either $\rho_i$ onto the
$\rho_{A_2}$-line is $\tfrac32\rho_{A_2}$, and on the corresponding
half-cone the remaining component is nonnegative.  Consequently
$\rho_i\geq\tfrac32\rho_{A_2}$ there.  Restricting
\eqref{eq:rho-q-decay} therefore replaces $q_{G_2}$ by
$q_{A_2}=\tfrac23q_{G_2}$, and hence
\[
              p_{A_2}=\frac{2}{3}p_{G_2}
                       =\frac{4}{3},1,\frac{2}{3}.
\]

Now put $A_1$ at the highest root of this $A_2$.  On its split Cartan,
\[
                 \rho_{A_2}=2\rho_{A_1}.
\]
Thus $q_{A_1}=q_{A_2}/2$, or equivalently
\[
 p_{A_1}=\frac12p_{A_2}=\frac13p_{G_2}
          =\frac46,\frac36,\frac26.
\]

It remains to consider the two subgroups of $C_3$ in $E_7$.  The
$C_3$ pointwise estimate is $a^{-\rho_{C_3}}$, so $q_{C_3}=1$ and
$p_{C_3}=2$.  In the standard coordinates, the dominant cone of
$A_1^3$ is the positive octant, the union of the six $C_3$ cones, and
\[
 \rho_0=\rho_{A_1^3}=e_1+e_2+e_3,
 \qquad
 \rho_\sigma=3e_{\sigma(1)}+2e_{\sigma(2)}+e_{\sigma(3)}.
\]
The orthogonal projection of every permutation of $(3,2,1)$ onto the
diagonal line is $(2,2,2)=2\rho_0$.  More explicitly, on the cone
$x_{\sigma(1)}\geq x_{\sigma(2)}\geq x_{\sigma(3)}\geq0$ one has
\[
 (\rho_\sigma-2\rho_0)(H)
       =x_{\sigma(1)}-x_{\sigma(3)}\geq0.
\]
Therefore $a^{-\rho_\sigma}\leq a^{-2\rho_0}$, so
$q_{A_1^3}=1/2$ and $p_{A_1^3}=1$.

Finally, for the diagonal $A_1^\Delta\subset A_1^3$ its half sum, in
these coordinates, is
\[
             \rho_{A_1^\Delta}
                =\left(\frac13,\frac13,\frac13\right)
                =\frac13\rho_0.
\]
Restricting $a^{-2\rho_0}$ to its split Cartan gives
$a^{-6\rho_{A_1^\Delta}}$.  Hence $q_{A_1^\Delta}=1/6$ and
$p_{A_1^\Delta}=1/3=2/6$.  Compact Weyl representatives transfer the
estimates between the finitely many cones, and the strict
$\varepsilon>0$ absorbs all polynomial factors.

Finally, since $G_2\subset D_4\subset E_6$, one has
\[
       T_2=C_{E_6}(D_4)^\circ
          \subset C_{E_6}(G_2)^\circ=A_2^c.
\]
Both $T_2$ and $A_2^c$ have split rank two, so $T_2$ is a maximal split
torus of $A_2^c$.  The pointwise estimate corresponding to
$p_{A_2^c}=1$ is, on every Weyl chamber,
\[
 |M(a)|\ll(1+\|\log a\|)^d a^{-2\rho_{A_2^c}}.
\]
In logarithmic coordinates this is exponential decay in every nonzero
direction, whereas Haar measure on $T_2$ is Lebesgue measure.  It follows
that $M|_{T_2}\in L^q(T_2)$ for every $q>0$.

Every smaller-side critical exponent listed in the proposition is
strictly less than $2$.  Choose $r$ with
\[
                    \max\{1,p_S\}<r<2
\]
(and any $1<r<2$ for $T_2$).  Then $M|_S\in L^r(S)$; writing
$r=2-\varepsilon$ proves \eqref{eq:below-L2} and completes the proof of
Proposition~\ref{prop:smaller-side-injectivity}.
\end{proof}

\end{document}